\newtheorem{Theorem}{Theorem}[section]
\newtheorem{Proposition}{Proposition}[section]
\newtheorem{Remark}{Remark}[section]
\newtheorem{Definition}{Definition}[section]
\numberwithin{equation}{section}
\title{Input-output consistency in perfect integrate and fire interconnected neurons}
\begin{document}

	\author{$\text{Petr Lansky}_1\!\!\footnote{We started this work several years ago. Unfortunately, we had substantial delay and last year our colleague and friend Petr Lansky passed out. His contribution to the results was fundamental and we dedicate the outcome of this effort to his memory.}$\vspace{.5cm} , $\text{Federico Polito}_2$, $\text{Laura Sacerdote}_2$\\
		\footnotesize (1) -- Institute of Physiology, Academy of Sciences of the Czech Republic\\ \footnotesize Videnska 1083, Praha 4, Czech Republic\\
		\footnotesize (2) -- Dipartimento di Matematica, Universit\`a di Torino\\ \footnotesize Via Carlo Alberto 10, 10123, Torino, Italy
}
	
	\maketitle

	\begin{abstract} 
		\noindent Interspike intervals describe the output of neurons.
		Signal transmission in a neuronal network implies that the output of some neurons becomes the input of others.
        The output should reproduce the main features of the input to avoid a distortion when it becomes the input of other neurons, that is input and output should exhibit some sort of consistency.
        In this paper, we consider the question: how should we mathematically characterize the input in order to get a consistent output?
        Here we interpret the consistency by requiring the reproducibility of the input tail behaviour of the interspike intervals distributions in the output.
        Our answer refers to a system of interconnected neurons with stochastic perfect integrate and fire units.
        In particular, we show that the class of regularly-varying vectors is a possible choice to obtain such consistency. Some further necessary technical hypotheses are added.
	\end{abstract}
	
    \medskip

    \noindent\textit{Keywords}: target neuron model, perfect integrate and fire, first passage time, interspike intervals, multivariate point process, time and space structure, regular variation, heavy tails, asymptotic independence
	
	\section{Introduction}

        A key aspect in neuroscience studies regards signal transmission in neuronal systems.
        The use of mathematical models for neuronal activity is a natural tool to perform such studies.
        Clearly, to investigate the signal transfer, the input and output part of the
        model has to be specified. There is an abundant number of studies where the authors
        have made a set of assumptions about the input properties and the output
        of the target neuron is investigated. The usual assumptions contain the terms like
        Poissonian noise and independence. By releasing these assumptions, more and more
        complex models have been considered. However, up to our knowledge, the consistency
        between the input  properties and neuronal output has never been investigated.
        The straightforward question posed in this paper is how to modify the mathematical properties of the
        input to achieve the same properties of the output neurons. As seen below, the
        simplicity of the question does not imply a simple answer.

        Biophysical and more abstract, mathematical, models of   neurons aim to describe
        signal transmission within a neuronal network. The    models are intrinsically stochastic
        for at least two reasons. At first, the experimentally observed neuronal activity is up
        to a certain extent always characterized by random fluctuations. This randomness is
        apparent in the highly irregular activity of neurons despite keeping the experimental
        conditions fixed. The lack of reproducibility of the individual firing patterns may be
        seen as an argument for the rate coding hypothesis (early proposed by \cite{adrian1928basis}; see also \cite{gestner2002spiking,tuckwell1988introduction}) and the deterministic neuronal models would be a sufficient description of the reality. However, the rate code would
        be rather inefficient and hardly realistic \cite{Brette2015}. Therefore, the rate code has to be replaced or
        at least extended and the alternatives known under different names -- variability coding,
        correlation coding, temporal coding -- investigated. From this fact stems out the second
        reason for stochastic modeling. It serves as a tool to quantify the information contained
        in the neuronal activity. 
        
        Stochastic Integrate and Fire (IF) models describe the interspike intervals (ISIs) as first passage times of a stochastic process through a boundary and own their success to their capability of conjugating some sort of biological realism with a reasonable mathematical tractability
        \cite{Burkitt20061,Burkitt200697,gerstner2014neuronal,Sacerdote201399,tuckwell1988introduction}.
        Since the seminal paper \cite{gerstein1964random}, many modifications of such model were proposed with the aim to improve the realism without losing the actual tractability \cite{Clusella2022,D'Onofrio2018,Jahn2011563,Jaras20215249}. 
        Generally, these stochastic IF models are Markovian thanks to the assumptions about the input from the surrounding neurons that is hypothesized Poissonian \cite{Kass20058,ricciardi} and to the independence assumption on superimposed inputs \cite{Abeles1982317,Ascione20206481,ricciardi,Tamborrino201445} (see \cite{ASCIONE20201130} for an example of semi-Markov IF model). However, as discussed in \cite{Lindner2006} the superposition of Poisson inputs does not determine a Poisson process if the inputs are not independent (see also \cite{Averbeck2009310}). Temporal correlations of the input may determine different non-Poissonian distributions of the global input whose effects survive in the output \cite{aaa}. Moreover, \cite{rosenbaum2010pooling} shows that strong correlations in postsynaptic membrane potentials may arise due to correlations in the input. As far as the output is concerned, often
        experimental data show ISIs distributions with heavy tails \cite{Choudhary2020148,Kass2018183,Kim200539,Teka2017110}. However to obtain such distributions from stochastic IF models one has to force the dynamics introducing specific boundaries \cite{Chacron2003253,Chacron2007301,Persi20042577}, making use of random time-changes \cite{Ascione2019}, or going back to the special case of no drift in the original Gerstein and Mandelbrot model. 

        Signal transmission implies that the output of each neuron becomes the input of the next ones. In order to model the membrane potential dynamics of neurons through a stochastic IF, it becomes natural to investigate under which hypotheses there is consistence between input and output. That is, to recognize a class of models for which the output reproduces the features requested for the input. This paper is a first step in this direction: we define a set of biologically meaningful hypotheses for the input and we prove that these features are retained by the output of a simple discrete time and discrete state space IF model. The main feature we aim to reproduce is the existence of heavy tails for the ISIs distribution, to which we add some technical hypotheses that are compatible with the biological framework.
        Gerstein and Mandelbrot \cite{gerstein1964random} considered stable distributed random variables as output of their IF model to obtain a good fit to data exhibiting heavy tails. However, input characterized by stable distributions do not reproduce the stability property in the output.
        Hence, we enlarge the class of random variables used to model ISIs and propose the use of regularly-varying vectors. We model the ISIs of $n$ neurons as regularly-varying vectors and we prove that the IF paradigm preserves this feature, if we add some technical hypotheses.
        
        The considered model is presented in Section \ref{descr}, while in Section \ref{IOC} we prove that input-output consistency is achieved. Finally, Section \ref{conc} reports some conclusions.

	\section{Model description}\label{descr}
	
	    
	   The object of our study will be a set of dependent stochastic
        point processes, producing an input signal, and a set of perfect integrators fed by those processes. The presence of dependence in the input processes is motivated by the hypothesized existence of an underlying network structure in which communication between vertices is controlled by a stochastic IF mechanism. In the following, we will not be interested in the topological structure of the underlying network, but rather we fix the form of the dependence in the input signal and we proceed the analysis as it is detailed below.
        In particular, for our aims it is not necessary to detail the structure of the connections while we fix the wished stochastic dependency determined by them.
        
%
	    \begin{figure}
	        \centering
            \includegraphics[scale=.35]{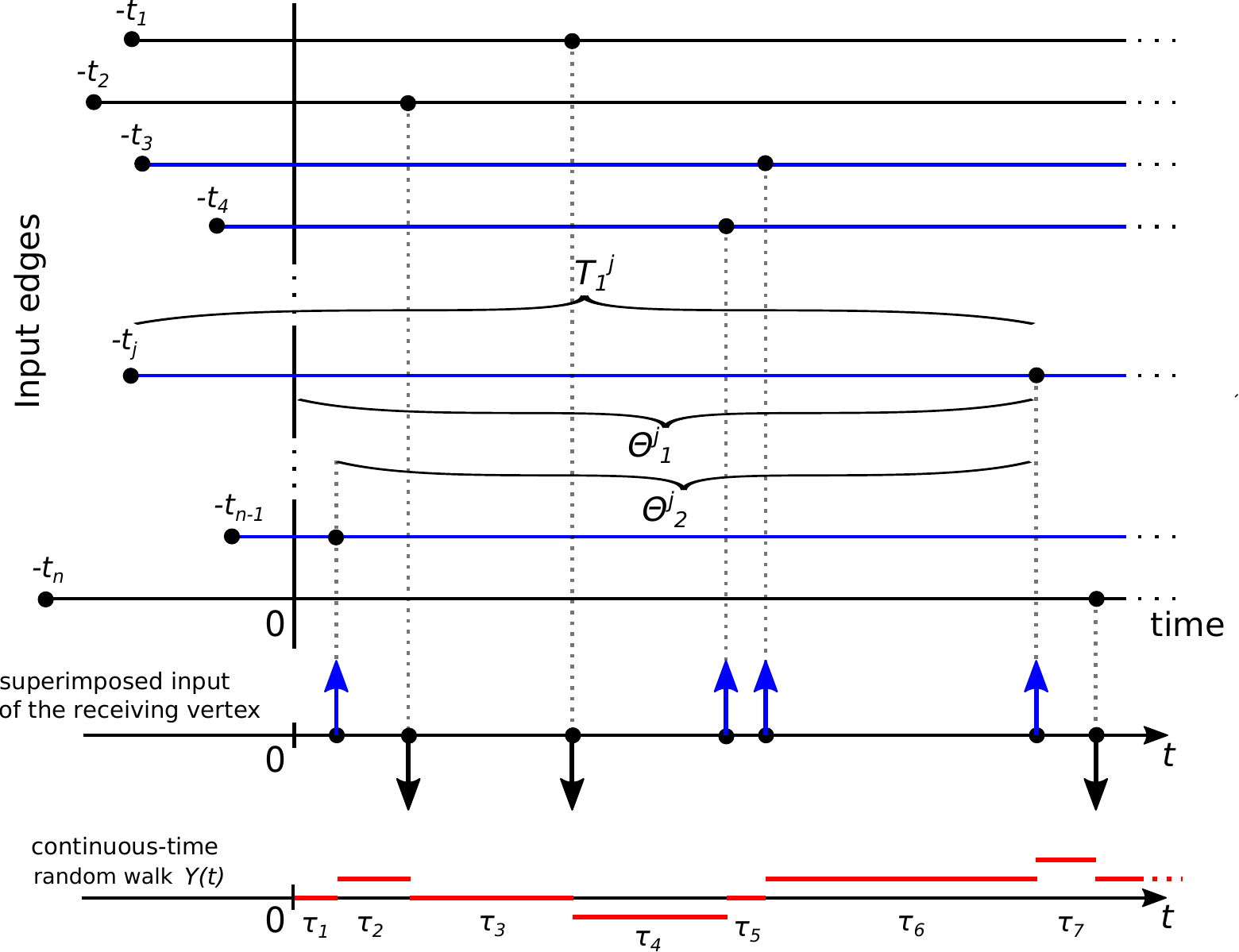}
	        \caption{\label{figi}Pooled input signal to the receiving vertex $v_0$. The figure shows how the input is formed by the superposition of the individual contributions coming from the input vertices $v_1,\dots,v_n$. The waiting times corresponding to vertices belonging to the subset $V^\uparrow$ are pictured in blue. Those related to vertices of $V^{\downarrow}$ are pictured in black. Note that the inter-arrival times $\tau_h$, $h \in \mathbb{N}^*$, of the continuous-time random walk $Y$ are given by $\tau_h=\min_{j\in \{1,\dots,n\}}\Theta_h^j$. We remark that, after the arrival of the first event, that is, that related to $v_{n-1}$, each component of the vector $\bm{\Theta_2}$ (i.e., for each input neuron, the remaining random waiting time between the arrival time of the first event and the subsequent one) is a forward recurrence time interval except that related to $v_{n-1}$ which is a complete inter-arrival time.}
	    \end{figure}
	    \label{seca}
	    Thus, we are concerned with a class of models of signal transmission on networks in which communication among vertices happens with the stochastic IF mechanism. This class of networks is frequently considered in computational neuroscience to model neuronal interactions (see e.g.\ \cite{Bernardi2021,Brunel2000183,Sanzeni2022,Spiridon2000529,Vogels200510786} and, for machine learning applications, \cite{maass1997networks,TAVANAEI201947,Zhang2017333}),
	    but, actually, its structure can be described without recurring to terminology relative to a specific field.
	    Indeed, we consider a directed graph $G=(V,E)$ where $V$ is the set of vertices and $E$ is the set of directed edges connecting the vertices. Further, the set $V$ is partitioned into two disjoint sets: $V= V^\uparrow \cup V^\downarrow$ characterized by different behaviours.
        On $G$ we focus on a mechanism of signal transmission among the vertices through their edges. Specifically, our goal is to recognize a transmission mechanism such that some specific features of the input can be reproduced in the output signal, i.e.\ input and output are consistent with respect to some distributional properties.
	    For this purpose, we start by fixing a reference vertex, say $v_0\in V$, and the vertices $v_1,v_2,\dots,v_n$, $n \in \mathbb{N}^* = \{ 1,2,\dots\}$ feeding $v_0$. The activity is represented by point processes associated to each of the vertices of the graph.
	    The characterization of each point process is given by the random inter-arrival times of the point signals relative to each vertex $v_i$, i.e. by sequences of positive real-valued random variables
	    $S^i = (S_j^i)_{j \in \mathbb{N}^*}$, $i \in \{0, \dots, n\}$.
	    The existence of an underlying network structure implies that the point processes are in general dependent (so that we are actually in presence of a multivariate point process). Furthermore, the inter-arrival times of each point process are also in general dependent. We will explicit the nature of all these dependencies in the following.
	    
	    With reference to Figure \ref{figi} which shows a sketch of the input signal, let us fix our observation starting time at $t=0$, which should coincide with the emission time and consequent reset of the reference vertex $v_0$.
	    The time $t=0$, for each point process, falls  inside or at the beginning of one of their inter-arrival intervals. We  rename this element of $S^i$ as $T_1^i$, for every $i \in \{0,\dots, n\}$ to underline that we actually refer to intervals containing the zero.
	    Call $\bm{T_1} = (T_1^1,\dots, T_1^n)$ the random vector of the $n$ positive inter-arrival times and call $-\bm{t}=(-t_1,\dots,-t_n)$, $t_i \in [0, \infty)$, $i\in \{1,\dots,n\}$, the vector of times at which the random inter-arrival times have started.
    	As soon as the first event of the input flow occurs, a new vector of random waiting times $\bm{T_2}$ is defined from $\bm{T_1}$: it coincides with $\bm{T_1}$ but we substitute the inter-arrival time corresponding to the vertex responsible of the event with its next inter-arrival time. We proceed with this rolling mechanism every time a new event occurs, thus defining $\bm{T_3}$, $\bm{T_4}$, and so forth. Figure \ref{figi} also shows the vectors of forward recurrence time intervals $\bm{\Theta_h} = (\Theta_h^1, \dots \Theta_h^n)$, $h \in \mathbb{N}^*$, associated to $\bm{T_h}$ (for the input neuron $v_j$, the quantity $\Theta_h^j$ represents the remaining random waiting time between the arrival time of the $(h-1)$-th event in the superimposed input flow and the subsequent event for $v_j$).

	    In Figure \ref{figi}, the input vertices are divided into those belonging to $V^\uparrow$ and those to $V^{\downarrow}$ (respectively pictured in blue and black) accounting for the effect they have on the receiving vertex. More precisely, the signal sent from one of the input vertices, say $v_i$, has value $c \in (0,\infty)$ if $v_i \in V^\uparrow$ while it is equal to $-c$ if $v_i \in V^\downarrow$.
	    The distinction between $\uparrow$-events and $\downarrow$-events determines a marked point process representing the superimposed input of the receiving vertex $v_0$ (shown in the figure).
	    Furthermore, we define a continuous-time random walk $Y = (Y(t))_{t \in \mathbb{R}_+}$, starting at zero at time $t=0$, whose jumps (of size $c$ or $-c$) happen in correspondence of the events of the marked point process.
	    Finally, the emission of a signal for the receiving vertex occurs when the random walk $Y$ attains a certain fixed level or threshold $b$.
	    Note that $Y(t) = b$ if and only if one of the input vertices belonging to $V^\uparrow$ has emitted a signal at time $t$. Similarly to the other vertices, the value of the signal emitted by $v_0$ will be $c$ or $-c$ depending on whether $v_0$ belongs to $V^\uparrow$ or $V^\downarrow$.
	    
	    So far we have described the model with a single reference vertex. This can be extended to $m$ reference vertices. These vertices may differ by their parameters such as the jump amplitude or the threshold level. Figure \ref{fig} shows a sketch of possible mesoscopic structures feeding two reference vertices $v_A$ and $v_B$ ($m=2$).   
	    The input vertices can be grouped in two possibly non-disjoint sets, $A=\{v_j\}_{j=1,\dots,\overline{n}}$ and $B=\{v_j\}_{j=\underline{n},\dots,n}$ with $\underline{n}\le\overline{n}+1$.
	    Note that $\bm{T_1}$, and hence all the other vectors derived from it, includes the inter-arrival times related to all the vertices in $A \cup B$. Only when necessary we will add a superscript describing the subset of interest: $\bm{T_1^A}$ or $\bm{T_1^B}$.
	    We explicitly underline that when the vector $\bm{T_1}$ will be updated to form $\bm{T_2}$, only one of the two subsets $\bm{T_2^A}$, $\bm{T_2^B}$ will differ from the previous subsets $\bm{T_1^A}$, $\bm{T_1^B}$. 
        Plainly, the cardinality of $A \cap B$ equals to $\overline{\underline{n}}= \overline{n}+1-\underline{n}$ and can be zero in case of disjointness of $A$ and $B$.
        The sets $A$ and $B$ of input vertices form the pooled input signals to $v_A$ and $v_B$, respectively. See Fig.\ \ref{fig}, right, for a visual representation of this configuration. At the level of the underlying structure it means that each vertex in $A$ has a directed edge to $v_A$ and each vertex in $B$ has a directed edge to $v_B$.
        The only assumption we consider concerning the microscopic relations between vertices in $A$ and in $B$ regards the presence of a dependency structure on the inter-arrival times between successive events.
        A similar configuration is present in \cite{rosenbaum2010pooling} in which the authors study the effect of correlations in the pooled input flow 
        and describe how output correlations in one layer impact correlations between the pooled inputs to the succeding layer in a feedforward network.         
        \begin{figure}
            \centering
            \begin{tabular}{ccc}
            \includegraphics[scale=.35]{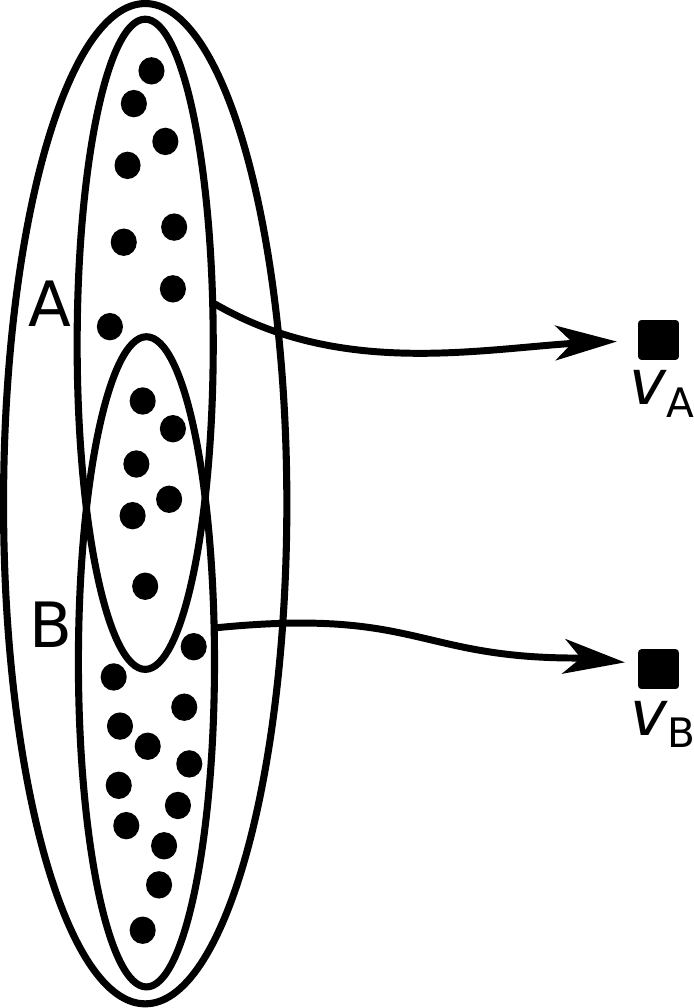}
            & \hspace{1cm} &
            \includegraphics[scale=.16]{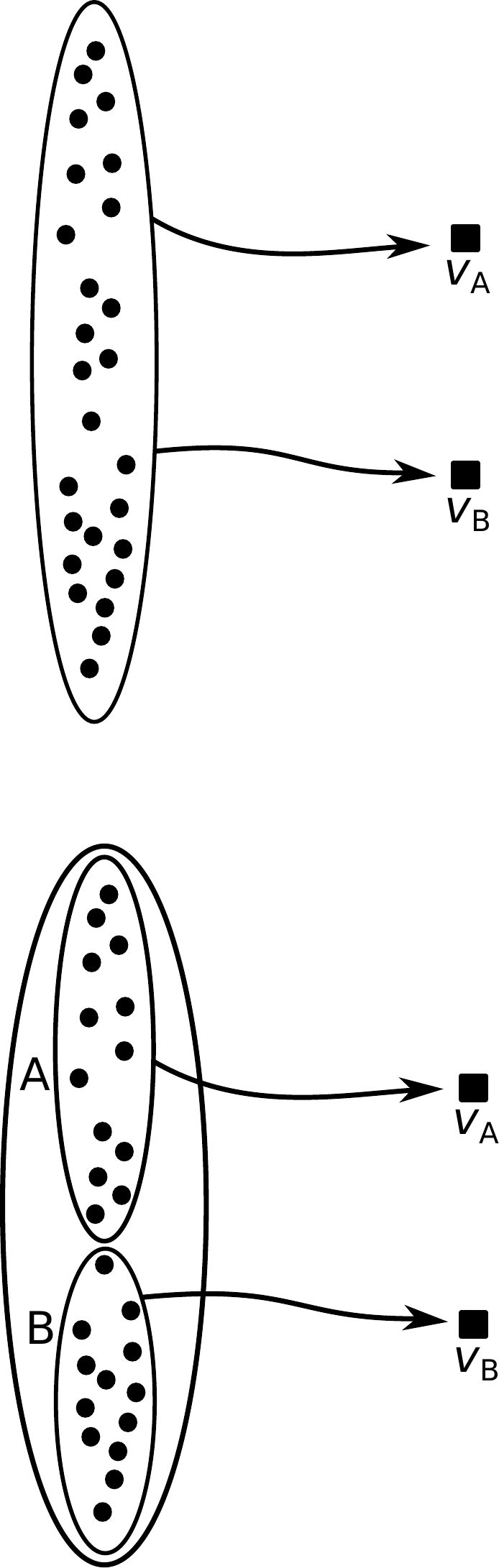}
            \end{tabular}
            \caption{\label{fig}In the left-hand-side picture, the mesoscopic structure of the input to $v_A$ and $v_B$ where $(\overline{n},\underline{n})=(14,9)$ and $(n,\overline{\underline{n}})=(26, 6)$.  The input vertices (black dots) contributing to the input signal of the two reference vertices $v_A$ and $v_B$ (black squares) are grouped into the two subsets $A$ and $B$. Set $A$ forms the pooled input for the reference vertex $v_A$ while set $B$ forms that for the reference vertex $v_B$. Besides the general dependence structure of the set of the $n$ input vertices, the dependence between the two pooled inputs comes also from the non-disjointness of $A$ and $B$. In the right-hand-side top picture, all input vertices directly contribute to both the reference vertices $v_A$ and $v_B$. This corresponds to the special case $(\overline{n},\underline{n})=(n,1)$. The dependence structure between the inter-arrival times of $v_A$ and $v_B$ follows from that of the inter-arrival times of the input vertices. Note that only if $v_A$ and $v_B$ share the same parameters, their output would be synchronized. In the right-hand-side bottom picture we illustrate the case of disjointness between $A$ and $B$. Here the dependence between $v_A$ and $v_B$ is inherited by the dependency structure between the inter-arrival times of all input vertices.}
        \end{figure}
	    

    
        From now on, to further detail the model, we will make use of  the terminology used in neuroscience. In particular, the vertices will correspond to neurons and the directed edges to their synaptic connections. Following the literature, neurons transform signals through the IF mechanism described in Section \ref{seca}. In this framework the events will correspond to the neuronal spikes and the inter-arrival times between events to the interspike intervals.
                
        Our aim is to show that a set of distributional features of the pooled input signal are retained in the output signal from the receiving neurons. We call this property \emph{input-output consistency}.
        To fix the distributional properties of ISIs we have to mathematically formalize the observed properties found in recorded data. In particular, various data sets show 
        the presence of heavy tails in the empirical distributions of the ISIs \cite{Gal20137912,gerstein1964random,kass2018computational,Tsubo2012,tuckwell1988introduction}. Further, due to the presence of connections between neurons, the ISIs clearly exhibit a dependence structure.
        
        We consider an ensemble of $n \in \mathbb{N}^*$ neurons, $v_1,\dots,v_{n}$, contributing to the pooled input signal of $m$ additional receiving neurons. For simplicity, and without losing generality, we fix $m= 2$ and call $v_A$ and $v_B$ the two receiving neurons. 
        In order to catch the heavy tails property of the empirical distributions and the dependencies between ISIs, we make four basic assumptions:
        \begin{itemize}
            \item[\textbf{H1}:] each of the vectors $\bm{T_1}$, $\bm{T_2}$, $\bm{T_3}, \dots$, is distributed as a regularly-varying random vector of parameter $\alpha \in (0,1)$ (see in the appendices, Definition \ref{def1});
            \item[\textbf{H2}:] all the one-dimensional marginals of such vectors are regularly-varying of the same order and asymptotically equivalent (see Definition \ref{asinteq});
            \item[\textbf{H3}:] each of the $n$ sequences $S^1,\dots, S^n$, describing the input ISIs is composed by pairwise asymptotically independent  random variables (see Definitions \ref{aaa} and \ref{aaaa});
            \item[\textbf{H4}:] input ISIs relative to different neurons (i.e.\ for each $h$, the components of $S^i$ and $S^j$ for $i \ne j$ belonging to the same $\bm{T_h}$) are fully dependent, i.e.\ for each $h$,
            \begin{align}
                \lim_{z \to \infty}\frac{\mathbb{P}(T_h^1 >z, \dots, T_h^n>z)}{\mathbb{P}(W_T>z)} = c_h \in (0, \infty),
            \end{align}
            where $W_T$ is a positive random variable asymptotically equivalent to the marginals of $\bm{T_h}$.
        \end{itemize}
          
        The rationale behind this modelling choice is to admit slowly-decaying tail behaviour with a possibly non-negligible but still mathematically tractable dependence structure.
        The basic definition of regularly-varying vectors (formula \eqref{f1}), imposes an asymptotically power-law probability distribution with a characterizing exponent $\alpha \in (0,1)$. This catches the experimentally observed tail of the empirical distribution of the ISIs.
        As a consequence of the regular variation hypothesis, a limiting measure $\nu$ on the spherical component of the random vector exists (formula \eqref{f2}). This measure describes the kind of dependence between the components of the vector (ISIs of different input neurons) for extremely long ISIs. Hence, the ISIs responsible of the long tails appear according to specific patterns regulated by the measure $\nu$. This would in turn imply that the measure $\nu$ could be involved in the neural coding.

	\section{Input-output consistency}\label{IOC}
	
	    In order to attain input-output consistency the hypotheses on the input must be reproduced in the output of the two receiving neurons $v_A$ and $v_B$.
    	A complete study of input-output consistency proceeds in two steps. First, we study of the model at the level of one of the two receiving neurons, then we analyse the full output consisting of the two receiving neurons.
    	
    	Let $\bm{T_1} = (T_1^1, \dots, T_1^{n})$ be the multivariate random vector describing the waiting time between signal emissions for each input neuron $v_1,\dots, v_{n}$ such that $T_1^j \ni 0$, $j \in \{1,\dots,n\}$.
    	As described above, $\bm{T_1}$ is distributed as a regularly-varying random vector of order $\alpha \in (0,1)$ (see Definition \ref{def1}). Recall also that we require each component of the vector to be regularly-varying of order $\alpha$ as well.
	
    	Consider now only the set $A$ characterized by $\overline{n}$ input neurons and the receiving neuron $v_A$. We have that the subvector $\bm{T_1^A}=(T_1^1,\dots, T_1^{\overline{n}})$ is regularly-varying distributed of order $\alpha$ as well.
    	In the following, for the sake of clarity we will drop the subscript in $v_A$ as well as the superscript in $\bm{T_1^A}$ and in all the derived vectors.
    	We also consider the already mentioned vector of forward recurrence time intervals $\bm{\Theta_1} = ( \Theta_1^1,\dots, \Theta_1^{\overline{n}} )$ which is defined, for each component (i.e.\ for each neuron belonging to $A$), as the remaining random waiting time between time zero and the following point-event.
    	
    	\begin{Theorem}\label{papap}
    	    The vector $\bm{\Theta_1}$ is regularly-varying of order $\alpha$ with asymptotically equivalent marginals.
	    \end{Theorem}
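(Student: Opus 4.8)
The plan is to realise $\bm\Theta_1$ as a deterministic componentwise translate of $\bm{T_1}$ and then invoke the stability of multivariate regular variation under translations. By construction, for each input neuron in $A$ the point $t=0$ lies inside the inter-arrival interval $T_1^j$, which started at $-t_j$ with $t_j\in[0,\infty)$ a fixed backward recurrence time; hence $\Theta_1^j=T_1^j-t_j\ge 0$, i.e.\ $\bm\Theta_1=\bm{T_1}-\bm t$ with $\bm t=(t_1,\dots,t_{\overline n})$ deterministic (equivalently, one argues conditionally on $\bm t$). By H1 and the accompanying requirement on the components, $\bm{T_1}$ is regularly-varying of order $\alpha$ (Definition \ref{def1}) with some limiting measure $\nu$, and by H2 its one-dimensional marginals are regularly-varying of order $\alpha$ and mutually asymptotically equivalent (Definition \ref{asinteq}).

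First I would record the univariate statement. For each $j$, writing $\mathbb P(T_1^j>x)=x^{-\alpha}L_j(x)$ with $L_j$ slowly varying, one has $\mathbb P(\Theta_1^j>x)=\mathbb P(T_1^j>x+t_j)=(x+t_j)^{-\alpha}L_j(x+t_j)$; the uniform convergence theorem for slowly varying functions gives $L_j(x+t_j)/L_j(x)\to 1$, and $(1+t_j/x)^{-\alpha}\to 1$, so $\mathbb P(\Theta_1^j>x)\sim\mathbb P(T_1^j>x)$ as $x\to\infty$. In particular each $\Theta_1^j$ is regularly-varying of order $\alpha$, and since the tails of the $T_1^j$ are mutually asymptotically equivalent by H2, so are the tails of the $\Theta_1^j$.

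Next I would prove the vector statement. Fix a normalising sequence $a_k\uparrow\infty$ with $k\,\mathbb P(a_k^{-1}\bm{T_1}\in\cdot)\xrightarrow{v}\nu$ on $\overline{\mathbb R}^{\overline n}_+\setminus\{0\}$. For a continuous $f$ with compact support bounded away from the origin, $k\,\mathbb E f(a_k^{-1}\bm\Theta_1)=k\,\mathbb E g_k(a_k^{-1}\bm{T_1})$ with $g_k(\bm x)=f(\bm x-a_k^{-1}\bm t)$; since $f$ is uniformly continuous and $a_k^{-1}\bm t\to 0$, $g_k\to f$ uniformly and, for $k$ large, all the $g_k$ are supported in a common compact set bounded away from the origin. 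A routine estimate — splitting $\int g_k\,d\mu_k-\int f\,d\nu$ into $\int(g_k-f)\,d\mu_k$ and $\int f\,d\mu_k-\int f\,d\nu$ and using that $\mu_k:=k\,\mathbb P(a_k^{-1}\bm{T_1}\in\cdot)$ has uniformly bounded mass on that compact set — shows $k\,\mathbb E g_k(a_k^{-1}\bm{T_1})\to\int f\,d\nu$. Hence $k\,\mathbb P(a_k^{-1}\bm\Theta_1\in\cdot)\xrightarrow{v}\nu$, so $\bm\Theta_1$ is regularly-varying of order $\alpha$ with the same limiting (and angular) measure as $\bm{T_1}$. Equivalently, one may argue by set sandwiching: for every $\varepsilon>0$ and all $k$ large the dilated set $a_kC+\bm t$ is trapped between $a_k(C\ominus B_\varepsilon)$ and $a_k(C\oplus B_\varepsilon)$ (Minkowski erosion/dilation by the ball of radius $\varepsilon$), and letting $\varepsilon\downarrow 0$ along $\nu$-continuity sets yields the same conclusion. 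This is just the well-known fact that regular variation is preserved under addition of an asymptotically negligible — here deterministic — term.

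I expect the only genuinely delicate point to be the bookkeeping in this last step: making the interchange of the limit in $k$ with the vanishing translation rigorous in the vague topology (equivalently, controlling the mass that $\nu$ and the $\mu_k$ place near the boundaries of the test sets), and, on the modelling side, ensuring the translation vector $\bm t$ is legitimately treated as fixed — or, if one insists on its randomness, that it is independent of $\bm{T_1}$ with tails lighter than $x^{-\alpha}$, which suffices for the same conclusion. Everything else — the univariate tail equivalence and the passage to the marginals — is standard Karamata-type manipulation.
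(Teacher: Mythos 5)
Your proposal is correct and follows essentially the same route as the paper: both identify $\bm{\Theta_1}$ as the translate $\bm{T_1}-\bm{t}$ by the deterministic vector of backward recurrence times and conclude by the invariance of multivariate regular variation under addition of an asymptotically negligible term (the paper delegates this step to Lemma 3.12 of \cite{MR2281913}, whereas you prove it inline via a vague-convergence test-function argument). The only cosmetic difference is that the paper carries the conditioning on $\{\bm{T_1}>\bm{t}\}$ explicitly and lets the normalising constant cancel in the defining ratio \eqref{duo}, a point your unconditional identification glosses over harmlessly.
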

	       	
    	To determine the random waiting time until the next point-event in the input flow to $v$ we need to calculate the minimum between the components of the vector $\bm{\Theta_1}$:
    	\begin{align}
    	    \tau_1=\min_{j=1,\dots,\overline{n}}\Theta_1^j.
    	\end{align}
    	From \cite{MR2281913}, Section 5.4, we have that the minimum of dependent regularly-varying random variables is still regularly-varying of the same parameter. Hence $\tau_1$ is regularly-varying distributed of parameter $\alpha$.
     	The same reasoning as above and again considering \cite{MR2281913}, Section 5.4, allows us to state that
    	$\bm{\Theta_k} \sim \text{MRV}(\alpha)$ (multivariate regularly-varying vector, see Definition \ref{nyman}), for each given $k \ge 1$, and
    	\begin{align}\label{tastiera}
    	    \tau_k = \min(\Theta_k^1, \dots, \Theta_k^{\overline{n}}) \sim \text{RV}(\alpha), \qquad k \ge 1.
	    \end{align}
	    
	    Note that $(\tau_k)_{k\ge 1}$ is a sequence of dependent random variables, each of them regularly-varying distributed, and represents the waiting times between point-events in the pooled input flow for the receiving neuron $v$. Furthermore, the sequence $(\tau_k)_k$ is composed by asymptotically equivalent random variables.
	    Next theorem states the asymptotic independence for the sequence $(\tau_k)_k$.
        \begin{Theorem}\label{cpc}
            The sequence of random waiting times $(\tau_k)_k$ is pairwise asymptotically independent.
        \end{Theorem}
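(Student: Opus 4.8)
The plan is to verify, for every pair $k\neq l$, the condition of pairwise asymptotic independence of Definitions~\ref{aaa}--\ref{aaaa}, i.e.\ that $\mathbb{P}(\tau_k>z,\ \tau_l>z)=o\bigl(\mathbb{P}(\tau_k>z)\bigr)$ as $z\to\infty$ (the tails of $\tau_k$ and $\tau_l$ being already known to be asymptotically equivalent). Assume $k<l$ without loss of generality. The idea is structural: on the event $\{\tau_k>z,\ \tau_l>z\}$ one can always single out two \emph{distinct} interspike intervals of one and the same input neuron that are both forced to exceed $z$; since by \textbf{H3} any two distinct ISIs of the same neuron are asymptotically independent, this event has a negligible tail, and a union bound over the finitely many input neurons transfers the conclusion to $(\tau_k,\tau_l)$.

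To make the claim precise, let $v_{j_0}$ be the input neuron that fires at the $k$-th pooled point-event (so $j_0$ is in general random), and let $r$ be the number of times $v_{j_0}$ fires during the events $k,k+1,\dots,l-1$, so that $1\le r\le l-k$. Denote by $S^{j_0}_{p}$ the interspike interval of $v_{j_0}$ straddling the $(k-1)$-th pooled event: by the rolling mechanism $\Theta_k^{j_0}$ is a (forward-recurrence) portion of $S^{j_0}_{p}$, hence $\tau_k\le\Theta_k^{j_0}\le S^{j_0}_{p}$ and $\{\tau_k>z\}\subseteq\{S^{j_0}_{p}>z\}$. After $v_{j_0}$ has fired $r\ge 1$ times, its ISI index has advanced by $r$, and $\Theta_l^{j_0}$ is a portion of $S^{j_0}_{p+r}$; likewise $\tau_l\le\Theta_l^{j_0}\le S^{j_0}_{p+r}$, so $\{\tau_l>z\}\subseteq\{S^{j_0}_{p+r}>z\}$, with $p+r>p$. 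Consequently
\begin{align}\label{unionb}
    \{\tau_k>z,\ \tau_l>z\}\ \subseteq\ \bigcup_{j=1}^{\overline{n}}\ \bigcup_{r=1}^{l-k}\ \bigcup_{p\ge 1}\bigl\{\,j_0=j,\ p(j)=p,\ r(j)=r,\ S^{j}_{p}>z,\ S^{j}_{p+r}>z\,\bigr\}.
\end{align}
For fixed $j$ the straddling index $p(j)$ has a single value on each sample path, so the events on the right of \eqref{unionb} are disjoint in $p$; discarding the structural constraints and invoking \textbf{H3} for each pair $(S^{j}_{p},S^{j}_{p+r})$, together with \textbf{H2} (all marginals regularly varying of index $\alpha$ and asymptotically equivalent to the tail of $\tau_k$), should yield $\mathbb{P}(\tau_k>z,\ \tau_l>z)=o\bigl(\mathbb{P}(\tau_k>z)\bigr)$.

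The delicate point --- and the main obstacle --- is exactly this last step: the straddling ISI $S^{j_0}_{p(j_0)}$ is a forward-recurrence interval, hence not measurable with respect to the natural history up to the $(k-1)$-th event, so the right-hand side of \eqref{unionb} cannot be controlled by a naive term-by-term estimate over the a priori unbounded index $p$. The way I would handle it is to condition on the $\sigma$-field generated by the labels of the firing neurons and by the ISI indices up to the $(k-1)$-th event --- which makes $j_0$, $p(j_0)$ and $r$ measurable --- and then use \textbf{H1}, which pins down the joint law of the ISI vectors tightly enough that the conditional joint tail of $(S^{j_0}_{p(j_0)},S^{j_0}_{p(j_0)+r})$ is uniformly $o$ of its marginal tail; averaging back over the finitely many values of $j_0$ and $r$ then closes the argument. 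An alternative, more conceptual route is to show directly that the bivariate vector $(\tau_k,\tau_l)$ is multivariate regularly varying --- which follows from \textbf{H1} and the min- and time-shift-stability already used to obtain \eqref{tastiera} --- and that its limiting angular measure is concentrated on the two coordinate axes, the possible off-axis mass being annihilated by \textbf{H3}; asymptotic independence of $\tau_k$ and $\tau_l$ is then immediate from the support of that measure.
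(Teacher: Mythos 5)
Your core reduction coincides with the paper's. Since each $\tau$ is a minimum, on the event $\{\tau_k>z,\ \tau_l>z\}$ \emph{every} component of both vectors $\bm{\Theta_k}$ and $\bm{\Theta_l}$ exceeds $z$; one then extracts a single input neuron whose two straddling inter-arrival times are distinct, bounds the forward recurrence times by the full ISIs, and invokes \textbf{H3} to conclude that the joint tail is $o\bigl(\mathbb{P}(\mathcal{W}>z)\bigr)$, where $\mathcal{W}$ controls the common marginal tail of the input ISIs. Your choice of the neuron responsible for the $k$-th pooled event is in fact the cleaner one, since it guarantees by construction that the two straddling ISIs have different indices (the paper instead picks the neuron responsible for the later event and simply asserts non-coincidence). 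The measurability/size-biasing issue you flag around the random straddling index $p$ is real, but the paper glosses over it in essentially the same way you propose to; I would not count that against you.

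The genuine gap is in the half of the argument you do not address: the target is $\mathbb{P}(\tau_k>z,\tau_l>z)=o\bigl(\mathbb{P}(\tau_k>z)\bigr)$ (equivalently, $o$ of the tail of a variable asymptotically equivalent to the $\tau$'s, as Definition \ref{aaa} requires), whereas your union bound only delivers $o\bigl(\mathbb{P}(\mathcal{W}>z)\bigr)$. These coincide only if $\mathbb{P}(\tau_k>z)$ is bounded below by a positive constant times $\mathbb{P}(\mathcal{W}>z)$, and that is false for a generic minimum of $\overline{n}$ dependent regularly-varying variables of index $\alpha$ (under independence the minimum would have index $\overline{n}\alpha$, a strictly lighter tail, and the ratio you need would blow up). You attribute the required equivalence to \textbf{H2}, but \textbf{H2} only makes the one-dimensional marginals $\Theta_k^j$ asymptotically equivalent to one another; it says nothing about their minimum. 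The missing ingredient is \textbf{H4} (full dependence), which is exactly what the paper's second step uses: writing $\mathbb{P}(\tau_k>z)=\mathbb{P}(\Theta_k^{m_1}>z)\,p_k^{m_1}(z)$ as in \eqref{uno}--\eqref{aa} and arguing that the conditional factor $p_k^{m_1}(z)$ stays bounded away from zero because the components of $\bm{\Theta_k}$ are fully dependent. Without this lower bound on the denominator the conclusion does not follow from your estimate. Your alternative route via the angular measure of $(\tau_k,\tau_l)$ inherits the same problem (mass concentrated on the axes is a statement relative to the norm of the pair, not to the marginal tail scale), and additionally the joint regular variation of $(\tau_k,\tau_l)$ across two different event indices is not directly supplied by \textbf{H1}, which constrains each $\bm{T_h}$ separately.
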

        The above theorem is a technical result that will be necessary to prove our next results. It seems acceptable from a modelling point of view. To realize this we have to imagine to conduct an experiment in which it can be possible to study pairs of $\tau$'s. In multiple realizations of the process $(\tau_k)_k$ it will be exceptional to obtain, in the same positions, repeated pairs of long-tailed ISIs.

        Now we turn our attention to the process $Y$ modelling the evolution of the membrane potential for the receiving neuron $v$. Notice that $v$ is a perfect integrator. Without losing generality, we set the jump amplitude of the membrane potential process $Y$ to $c=1$. 
        As soon as $Y$ reaches the constant boundary $b \in \mathbb{N}$, $v$ fires and $Y$ is reset to zero. Hence, we seek to study the distribution of the first passage time $Z$ of the membrane potential process $Y$ through $b$ (which represents the distribution of the first ISI of $v$).

        The random first passage time through the boundary $b$ equals the sum of a random number of the waiting times $\tau_k$'s. Specifically,
        \begin{align}
            Z= \sum_{k=1}^M \tau_k,
        \end{align}
        where $M$, independent of $(\tau_k)_k$, is the random time at which the discrete-time random walk $R=(R_n)_{n \ge 0}$, embedded into $Y$, attains the level $b$.
        
        The random walk $R$ is in general a biased random walk in which the bias is given by the nature of the input neurons: if the number of excitatory neurons exceeds the number of inhibitory neurons then $p$, i.e.\ the probability of a positive jump is larger than $1/2$ and there is a drift towards the boundary. In this case $M$ is a genuine random variable (i.e.\ $M$ is finite a.s.) and $\mathbb{E}M<\infty$. Otherwise, if there is an excess of inhibition ($p<1/2$) the drift leads the walker far from the boundary. In this case $M$ is defective (i.e.\ $\mathbb{P}(M < \infty) < 1$) and there is a non-negligible probability mass located at infinity (meaning that there exist trajectories of $R$ which eventually will not attain the boundary $b$) (see \cite{ricciardi}, chapter IV.1, for biased random walks).
        From a pure modelling point of view such trajectories will not give rise to any firing of the neuron $v$ (i.e.\ $v$ will remain silent). This behaviour is not interesting for us as we want to study input-output consistency of neuronal ISIs from active neurons. Hence, to let the analysis be meaningful, instead of looking at the distribution of $M$ we rather look at the distribution of $M$ conditional on the event $\{ M<\infty \}$. That is to say we condition to attaining the boundary in some finite future time. In this way the conditional expectation $\mathbb{E}[M|M<\infty]$ is finite as well. However, in order to simplify the notation, in the following we will frequently omit the conditioning if $p < 1/2$. 
        If we have exactly $p=1/2$, $M$ is almost surely finite but
        $\mathbb{E}M = \infty$.
        In our study we disregard this case as it is not interesting from a modelling point of view. Indeed, having $p=1/2$, that is to say, the same number of excitatory and inhibitory neurons is in fact exceptional.  

        Next theorem describes the distribution of $Z$ in the case when $p \ne 1/2$, i.e.\ in the cases in which the expectation of $M$ for $p >1/2$ (or the mentioned conditional expectation of $M$ for $p < 1/2$) is finite. 
 
        \begin{Theorem}\label{vavava}
            For $p \ne 1/2$,
            the random variable $Z$ representing the first ISI of the receiving neuron is regularly-varying distributed with order $\alpha$.
        \end{Theorem}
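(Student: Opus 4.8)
The plan is to realize $Z = \sum_{k=1}^M \tau_k$ as a randomly stopped sum of regularly-varying summands and invoke a Breiman-type / random-sum result for heavy tails. First I would record the ingredients already available: by \eqref{tastiera} each $\tau_k \sim \mathrm{RV}(\alpha)$ with $\alpha \in (0,1)$, by the remark following Theorem \ref{cpc} the $\tau_k$ are asymptotically equivalent, and by Theorem \ref{cpc} they are pairwise asymptotically independent; moreover $M$ is independent of $(\tau_k)_k$ and (after conditioning on $\{M<\infty\}$ when $p<1/2$) satisfies $\mathbb{E}M < \infty$, in fact all moments of $M$ are finite since $M$ is dominated by a geometric random variable when $p\ne 1/2$. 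The key classical fact I would cite is the following: if $(\xi_k)$ is a sequence of non-negative, asymptotically equivalent, pairwise asymptotically independent random variables with $\xi_k \sim \mathrm{RV}(\alpha)$, $\alpha>0$, and $N$ is an independent counting variable with $\mathbb{E}[(1+\delta)^{N}]<\infty$ for some $\delta>0$ (or at least sufficiently many moments, $\mathbb{E}N^{\lceil\alpha\rceil+\varepsilon}<\infty$), then $\sum_{k=1}^N \xi_k$ is $\mathrm{RV}(\alpha)$ with $\mathbb{P}(\sum_{k=1}^N\xi_k > z) \sim \mathbb{E}[N]\,\mathbb{P}(\xi_1 > z)$. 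This is the heavy-tailed analogue of the "principle of a single big jump": the sum is large essentially because one summand is large, and asymptotic independence guarantees no cancellation of this effect.

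Concretely, the steps would be: (i) fix $\alpha \in (0,1)$ so that $\lceil \alpha \rceil = 1$ and only a finite first moment of $M$ is needed for the upper bound, with the exponential moment available anyway; (ii) for the lower bound, condition on $M = \ell$ and use $\mathbb{P}(\sum_{k=1}^\ell \tau_k > z) \ge \mathbb{P}(\max_{k\le \ell}\tau_k > z) \ge \ell\,\mathbb{P}(\tau_1>z)(1+o(1))$ using asymptotic independence and asymptotic equivalence of the marginals, then sum against the law of $M$ and apply Fatou; (iii) for the upper bound, split $\{\sum_{k=1}^M \tau_k > z\}$ according to whether some $\tau_k$ exceeds $(1-\epsilon)z$ or all are below it, control the "all small" term by a Nagaev-type / large-deviation estimate for sums of truncated heavy-tailed variables (here the finiteness of exponential moments of $M$ is what makes the tail of $M$ irrelevant), let $z\to\infty$ and then $\epsilon \to 0$; (iv) conclude $\mathbb{P}(Z>z) \sim \mathbb{E}[M]\,\mathbb{P}(\tau_1>z)$, and since $\tau_1 \in \mathrm{RV}(\alpha)$ and $\mathbb{E}[M]\in(0,\infty)$ is a constant, $Z \in \mathrm{RV}(\alpha)$. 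In the $p<1/2$ case everything above is read with $M$ replaced by $M \mid \{M<\infty\}$, which is again geometric-like with all moments finite, so the argument is unchanged.

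The main obstacle I anticipate is the upper bound in step (iii): the standard random-sum tail asymptotics (e.g.\ in \cite{MR2281913}) are usually stated for i.i.d.\ summands, whereas here the $\tau_k$ are only pairwise asymptotically independent and merely asymptotically equivalent rather than identically distributed. So the work is in verifying that the truncated-sum large-deviation bound still applies under this weaker dependence — one route is to bound $\mathbb{P}(\sum_{k=1}^\ell \tau_k \mathbb{1}_{\{\tau_k \le (1-\epsilon)z\}} > z)$ uniformly in $\ell$ via Markov's inequality after an exponential tilt, or via a Kesten-type bound, using only one-dimensional tail information on each $\tau_k$ plus the uniform integrability supplied by the common regularly-varying envelope $W_T$, and then integrate out $M$ using its finite exponential moment. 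A second, minor technical point is making precise the phrase "asymptotically equivalent marginals" so that $\mathbb{P}(\tau_k > z)/\mathbb{P}(\tau_1 > z) \to 1$ uniformly enough in $k$ — but since $M$ has light tails, only finitely many $k$ matter up to an arbitrarily small error, so this causes no real difficulty.
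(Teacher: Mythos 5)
Your route is genuinely different from the paper's. The paper does not run a hands-on single-big-jump argument at all: it rewrites $Z=\sum_{k=1}^M\tau_k=\sum_{k=1}^\infty \tau_k\xi_k$ with Bernoulli weights $\xi_k=\mathbb{1}_{\{k\le M\}}$ and invokes Theorem 2 of \cite{MR2745413} on randomly weighted series of asymptotically independent, not identically distributed, regularly-varying summands; the only moment input is $\sum_k\mathbb{E}\,\xi_k^{\alpha\pm\delta}=\mathbb{E}M<\infty$ (or $\mathbb{E}[M\,|\,M<\infty]<\infty$ for $p<1/2$), after which $\mathbb{P}(Z>x)\sim\sum_k\mathbb{P}(M\ge k)\mathbb{P}(\tau_k>x)$ and regular variation of this series is checked directly. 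What the citation buys is precisely the step you flag as your main obstacle: the uniform control of the ``all summands small'' event when the $\tau_k$ are only pairwise asymptotically independent and merely asymptotically equivalent. What your route buys is self-containedness, at the price of using much stronger information on $M$ (its geometric-type tail) than the paper needs.

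Two remarks to close your acknowledged gap and fix a small constant. First, you do not need a Nagaev bound or an exponential tilt (which is awkward here anyway, since with arbitrary joint dependence of the $\tau_k$ the moment generating function of the truncated sum does not factorize). A cruder domination suffices: for each $\ell$, $\mathbb{P}\bigl(\sum_{k\le\ell}\tau_k>z\bigr)\le\sum_{k\le\ell}\mathbb{P}(\tau_k>z/\ell)$, and by asymptotic equivalence to the envelope $W_T$ together with Potter bounds this is at most $C\,\ell^{\,1+\alpha+\delta}\,\mathbb{P}(W_T>z)$ for $z\ge \ell z_0$; split the sum over $\ell$ at $\ell\le z^{1/2}$, where this bound is summable against the law of $M$ (all moments finite), and $\ell>z^{1/2}$, where $\mathbb{P}(M>z^{1/2})$ decays geometrically and is therefore $o(\mathbb{P}(W_T>z))$. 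Combined with the fixed-$\ell$ single-big-jump asymptotics (which do hold under pairwise asymptotic independence, cf.\ Lemma 3.1 of \cite{MR2281913}, as used in the paper's Proposition), this gives the upper bound. Second, since the $\tau_k$ given by \eqref{tastiera} are asymptotically equivalent but not identically distributed, the limits $c_k=\lim_z\mathbb{P}(\tau_k>z)/\mathbb{P}(\tau_1>z)$ need not equal $1$, so the correct asymptotic constant is $\sum_k\mathbb{P}(M\ge k)\,c_k$ rather than $\mathbb{E}M$; this is cosmetic for the conclusion $Z\in\mathrm{RV}(\alpha)$, which only requires the limit to be a finite positive constant.
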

        
        Consider now the sequence of output ISIs of the receiving neuron: 
        \begin{align}\label{ziai}
            (Z_i)_{i \in \mathbb{N}^*} = \Bigl(\sum_{h=1}^{M_1}\tau_h, \sum_{h=M_1+1}^{M_2}\tau_j,\dots, \sum_{h=M_{r-1}+1}^{M_r}\tau_h, \dots\Bigr).
        \end{align}
        As far as \textbf{H2} is concerned, note that each $Z_i$, $i \in \mathbb{N}^*$, composed by the random number $M_i - M_{i-1}$ of terms (let $M_0 = 0$ a.s.), can also be proven to be regularly-varying distributed of order $\alpha$, similarly as done for $Z$ above.
        Our next aim is to prove that the sequence \eqref{ziai}        
        is composed by pairwise asymptotically independent random variables, and hence \textbf{H3} is verified for the output of the receiving neuron. 
        For the sake of clarity, we show first a result on the asymptotic independence of the sum of two random variables taken from a group of three. This will be preparatory for the proof of the complete Theorem \ref{babel} below.
        
        \begin{Proposition}
            Let $X,Y,Z$ be three pairwise asymptotically independent, asymptotically equivalent, positive real-valued random variables, such that each of them is regularly-varying distributed of parameter $\alpha \in (0,1)$. Then, the random variables $S = X + Y$ and $Z$ are asymptotically independent.
        \end{Proposition}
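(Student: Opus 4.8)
The plan is to reduce the claim to the pairwise asymptotic independence hypotheses via the ``one big jump'' structure of regularly-varying tails, working with the joint-tail characterization of asymptotic independence: for positive $\mathrm{RV}(\alpha)$ variables $U,V$ with comparable tails, $U$ and $V$ are asymptotically independent precisely when $\mathbb{P}(U>z,V>z)=o\bigl(\mathbb{P}(U>z)\bigr)$ as $z\to\infty$, which is the notion of asymptotic independence of two regularly-varying random variables encoded in Definition~\ref{aaa}. First I would note that $S=X+Y$ is itself $\mathrm{RV}(\alpha)$ with tail asymptotically equivalent to those of $X,Y,Z$ --- a standard fact about sums of regularly-varying summands of equal index, see \cite{MR2281913}, Section~5.4 --- so the statement is well posed. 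Moreover, by positivity $S\ge X$ and $S\ge Y$, whence $\mathbb{P}(S>z)\ge\max\{\mathbb{P}(X>z),\mathbb{P}(Y>z)\}\ge\tfrac12\bigl(\mathbb{P}(X>z)+\mathbb{P}(Y>z)\bigr)$ for every $z>0$.

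The core of the argument is a deterministic inclusion. Since $X,Y\ge 0$, if $X\le z/2$ and $Y\le z/2$ then $X+Y\le z$; hence $\{S>z\}\subseteq\{X>z/2\}\cup\{Y>z/2\}$, and trivially $\{Z>z\}\subseteq\{Z>z/2\}$. Intersecting and applying the union bound,
\[
\mathbb{P}(S>z,\,Z>z)\;\le\;\mathbb{P}\bigl(X>\tfrac z2,\,Z>\tfrac z2\bigr)\;+\;\mathbb{P}\bigl(Y>\tfrac z2,\,Z>\tfrac z2\bigr).
\]
Next I would invoke pairwise asymptotic independence of the pairs $(X,Z)$ and $(Y,Z)$: the two terms on the right are $o\bigl(\mathbb{P}(X>z/2)\bigr)$ and $o\bigl(\mathbb{P}(Y>z/2)\bigr)$ respectively. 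Since $\mathbb{P}(X>z/2)\sim 2^{\alpha}\,\mathbb{P}(X>z)$ and $\mathbb{P}(Y>z/2)\sim 2^{\alpha}\,\mathbb{P}(Y>z)$ by regular variation of index $\alpha$, the right-hand side is $o\bigl(\mathbb{P}(X>z)+\mathbb{P}(Y>z)\bigr)$. Combining with the lower bound on $\mathbb{P}(S>z)$ from the first paragraph yields $\mathbb{P}(S>z,Z>z)/\mathbb{P}(S>z)\to 0$, and, via the asymptotic equivalence of the marginals, also $\mathbb{P}(S>z,Z>z)/\mathbb{P}(Z>z)\to 0$; this is the asserted asymptotic independence of $S$ and $Z$. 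Note that only the pairs involving $Z$ enter the proof --- the hypothesis that $X$ and $Y$ are themselves asymptotically independent is not needed here, although it is part of the hypotheses of the full Theorem~\ref{babel}.

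I do not anticipate a genuine obstacle; the only delicate point is to align the argument with the exact formalism of asymptotic independence adopted in the appendix (Definitions~\ref{aaa}--\ref{aaaa}). If that formalism is phrased through the exponent/spectral measure of the bivariate vector $(S,Z)$ rather than through the single joint-tail estimate above, one further --- but purely mechanical --- step is required: rerun the same deterministic inclusion with threshold $\varepsilon z/2$ in place of $z/2$, for every fixed $\varepsilon\in(0,1)$, so as to control all ``off-axis'' cones $\{s>\varepsilon r,\ z>\varepsilon r\}$, once more invoking the bivariate asymptotic independence of $(X,Z)$ and of $(Y,Z)$ and the uniform convergence built into regular variation. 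The companion verification that $S=X+Y$ is $\mathrm{RV}(\alpha)$ with marginals asymptotically equivalent to those of $X$ and $Z$ is standard and, as noted, follows from \cite{MR2281913}.
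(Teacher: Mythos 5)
Your proposal is correct and follows essentially the same route as the paper's proof: both rest on bounding $\{S>z,\,Z>z\}$ by the events $\{X>z/2,\,Z>z/2\}$ and $\{Y>z/2,\,Z>z/2\}$ (you via the deterministic inclusion and union bound, the paper via conditioning on $X\ge Y$ versus $Y\ge X$), then invoking pairwise asymptotic independence of $(X,Z)$ and $(Y,Z)$ together with the factor $2^\alpha$ from regular variation. The only cosmetic difference is that you lower-bound $\mathbb{P}(S>z)$ by $\max\{\mathbb{P}(X>z),\mathbb{P}(Y>z)\}$ whereas the paper normalizes by the common reference variable $\xi$ and uses Lemma 3.1 of \cite{MR2281913} for the exact tail of $S$; both suffice.
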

        \begin{proof}
            The asymptotic equivalence hypothesis means that there exists a positive random variable $\xi$ such that
            \begin{align}
                \lim_{x \to \infty}\frac{\mathbb{P}(X>x)}{\mathbb{P}(\xi>x)} = c_1 >0,
                \quad
                \lim_{x \to \infty}\frac{\mathbb{P}(Y>x)}{\mathbb{P}(\xi>x)} = c_2 >0,
                \quad
                \lim_{x \to \infty}\frac{\mathbb{P}(Z>x)}{\mathbb{P}(\xi>x)} = c_3 >0,
            \end{align}
            with $c_1,c_2,c_3$ finite constants. Moreover, by Lemma 3.1 of \cite{MR2281913}
            \begin{align}
                \lim_{x \to \infty}\frac{\mathbb{P}(S>x)}{\mathbb{P}(\xi>x)} = c_1+c_2.
            \end{align}
            Then,
            \begin{align}
                & \lim_{x \to \infty}\frac{\mathbb{P}(S>x,Z>x)}{\mathbb{P}(\xi>x)} \\
                &= \lim_{x \to \infty}\frac{\mathbb{P}(S>x,Z>x|X \ge Y)\mathbb{P}(X \ge Y)+\mathbb{P}(S>x,Z>x|Y \ge X)\mathbb{P}(Y \ge X)}{\mathbb{P}(\xi>x)} \notag \\
                &\le \lim_{x \to \infty}\frac{\mathbb{P}(X>x/2,Z>x|X \ge Y)\mathbb{P}(X \ge Y)+\mathbb{P}(Y>x/2,Z>x|Y \ge X)\mathbb{P}(Y \ge X)}{\mathbb{P}(\xi>x)} \notag \\
                &\le \lim_{x \to \infty}\frac{\mathbb{P}(X>x/2,Z>x/2)+\mathbb{P}(Y>x/2,Z>x/2)}{\mathbb{P}(\xi>x)} \notag \\
                & = \lim_{x \to \infty} \left( \frac{\mathbb{P}(X>x/2,Z>x/2)}{\mathbb{P}(\xi>x/2)}+\frac{\mathbb{P}(Y>x/2,Z>x/2)}{\mathbb{P}(\xi>x/2)}\right) \frac{\mathbb{P}(\xi>x/2)}{\mathbb{P}(\xi>x)}. \notag
            \end{align}
            Now, note that both addends in parentheses in the above formula converge to zero, as $x$ goes to infinity, due to the pairwise asymptotic independence of  $X$, $Y$, and $Z$ (see Definition \ref{aaa}). Further, since
            \begin{align}
                \lim_{x \to \infty} \frac{\mathbb{P}(\xi>x/2)}{\mathbb{P}(\xi>x)}
                = \lim_{x \to \infty} \frac{\mathbb{P}(X>x/2)}{\mathbb{P}(X>x)} = 2^\alpha < 2,
            \end{align}
            for the regular variation of $X$,
            we obtain that $Z$ and $S$ are asymptotically independent.
        \end{proof}

        \begin{Theorem}\label{babel}
            For $p \ne 1/2$, the sequence $(Z_i)_{i \in \mathbb{N}^*}$ of successive ISIs of the receiving neuron is composed by pairwise asymptotically independent random variables.
        \end{Theorem}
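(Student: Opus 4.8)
Here is a proof plan.

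The aim is to bootstrap the preceding Proposition from two summands to an arbitrary finite number of summands, and then to absorb the fact that the number of $\tau$'s entering each $Z_i$ is random. Fix $i<j$; by symmetry it is enough to show that $Z_i$ and $Z_j$ are asymptotically independent. Recall (as noted right after \eqref{ziai}, and exactly as in the proof of Theorem \ref{vavava}) that $Z_i$ and $Z_j$ are regularly-varying of order $\alpha$ and asymptotically equivalent to the common reference variable $\xi$; hence, by Definition \ref{aaa}, the assertion amounts to
\[
\mathbb{P}(Z_i>x,\,Z_j>x)=o\bigl(\mathbb{P}(\xi>x)\bigr),\qquad x\to\infty .
\]

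First I would prove a deterministic-index version of the preceding Proposition: if $X_1,\dots,X_N$ are pairwise asymptotically independent, pairwise asymptotically equivalent, each regularly-varying of order $\alpha\in(0,1)$, and $I,J\subseteq\{1,\dots,N\}$ are disjoint and non-empty, then $S_I=\sum_{h\in I}X_h$ and $S_J=\sum_{h\in J}X_h$ are asymptotically independent. This follows by induction on $|I|+|J|$. The case $|I|=|J|=1$ is the assumed pairwise asymptotic independence. For the inductive step write $I=I'\cup\{k\}$; from $\{S_I>x\}\subseteq\{S_{I'}>x/2\}\cup\{X_k>x/2\}$ one gets
\[
\mathbb{P}(S_I>x,\,S_J>x)\le \mathbb{P}(S_{I'}>x/2,\,S_J>x/2)+\mathbb{P}(X_k>x/2,\,S_J>x/2),
\]
and, dividing by $\mathbb{P}(\xi>x)$, rewriting each term as a ratio over $\mathbb{P}(\xi>x/2)$ times $\mathbb{P}(\xi>x/2)/\mathbb{P}(\xi>x)\to 2^\alpha$, one concludes by the induction hypothesis. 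Here Lemma 3.1 of \cite{MR2281913} is used to guarantee that every partial sum $S_{I'}$, $S_I$, $S_J$ is again regularly-varying of order $\alpha$ and asymptotically equivalent to $\xi$, so that the asymptotic independence statements about them make sense; the computation is the one already carried out in the preceding Proposition.

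Next I would condition on the block structure. Put $N_r=M_r-M_{r-1}$ and use that $(M_r)_r$ is independent of $(\tau_h)_h$. For $i<j$,
\[
\mathbb{P}(Z_i>x,\,Z_j>x)=\sum_{n_1,\dots,n_j\ge 1}\mathbb{P}(N_1=n_1,\dots,N_j=n_j)\;
\mathbb{P}\Bigl(\textstyle\sum_{h\in B_i}\tau_h>x,\ \sum_{h\in B_j}\tau_h>x\Bigr),
\]
where, for each fixed configuration $(n_1,\dots,n_j)$, the blocks $B_i$ and $B_j$ are disjoint \emph{deterministic} index sets. By Theorem \ref{cpc}, by \eqref{tastiera}, and by the asymptotic-equivalence remark following \eqref{tastiera}, the family $\{\tau_h\}_{h\in B_i\cup B_j}$ satisfies the hypotheses of the deterministic-index statement above, so each summand is $o(\mathbb{P}(\xi>x))$. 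It then remains to pass the limit inside the series. Bounding the generic summand by $\mathbb{P}(\sum_{h\in B_i}\tau_h>x)$ and combining a union bound with Potter's bounds and the uniform-in-$h$ control of the tail-equivalence constants of the $\tau_h$ supplied by \textbf{H2}, one obtains $\mathbb{P}(\sum_{h\in B_i}\tau_h>x)\le C\,n_i^{1+\alpha+\varepsilon}\,\mathbb{P}(\xi>x)$ for all large $x$, with $C$ independent of the position of the block. Since $N_i$ is a (in the case $p<1/2$, conditioned) first-passage-time increment of the biased embedded walk $R$ — and in the conditioned case the $h$-transformed walk has positive drift — it has finite moments of every order, so the dominating series equals $C\,\mathbb{E}\bigl[N_i^{1+\alpha+\varepsilon}\bigr]<\infty$. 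Dominated convergence for series then yields $\mathbb{P}(Z_i>x,Z_j>x)=o(\mathbb{P}(\xi>x))$, and since $i\ne j$ were arbitrary, $(Z_i)_{i\in\mathbb{N}^*}$ is pairwise asymptotically independent.

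The main obstacle is the last paragraph: the blocks $B_i,B_j$ are random, and their location — not merely their length — depends on all of $N_1,\dots,N_j$, so the interchange of limit and summation must rest on a domination uniform both in the block lengths and in their positions. This is exactly where the light tails of the random-walk passage times $N_r$ and the uniform tail control coming from \textbf{H2} are indispensable; the purely asymptotic part (the induction generalizing the preceding Proposition) is, by contrast, routine.
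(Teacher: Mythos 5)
Your plan is correct and shares the paper's overall skeleton: condition on the values of the $M$'s so that the two ISIs become sums over \emph{deterministic} disjoint blocks, invoke Theorem \ref{cpc} for the $\tau$'s, and then control the interchange of limit and series. Where you diverge is in the two technical steps. For fixed blocks, you bootstrap the preceding Proposition by induction with the halving inclusion $\{S_I>x\}\subseteq\{S_{I'}>x/2\}\cup\{X_k>x/2\}$ and Lemma 3.1 of \cite{MR2281913}; the paper instead conditions, inside each block, on which $\tau$ is the largest summand, bounds the block sum by (block length)$\times$(maximum), and reduces directly to $\sum_{j,l}\mathbb{P}(\tau_j>z/y,\tau_l>z/y)$ with $y$ the larger block length. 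Both work; your induction is more modular (it is a reusable deterministic-index generalization of the Proposition), while the paper's max-term bound avoids induction and keeps explicit constants, which it then needs anyway to compare $\mathbb{P}(W_\tau>z/y)$ with $\mathbb{P}(W_Z>z)$ via the series representation of $\mathbb{P}(Z_j>z)$ from Theorem \ref{vavava}. For the interchange, the paper simply asserts uniform convergence because each conditional term is bounded by $\mathbb{P}(H)$ — a bound that does not survive division by $\mathbb{P}(W_Z>z)$, so your explicit domination (union bound plus Potter-type estimates, then finiteness of $\mathbb{E}[N_i^{1+\alpha+\varepsilon}]$ from the geometric-type tails of the biased, or conditioned, first-passage increments) is actually the more careful treatment of the step you rightly identify as the main obstacle. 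One caveat: your dominating bound needs the Potter constants and the tail-equivalence constants of the $\tau_h$ to be uniform in $h$; \textbf{H2} and the remark after \eqref{tastiera} give asymptotic equivalence for each $h$ but not literally this uniformity, so you are adding an implicit regularity assumption — the same one the paper itself uses tacitly when applying Theorem 2 of \cite{MR2745413} and summing over $k$ — and it would be worth stating it explicitly rather than attributing it to \textbf{H2}.
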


        Hence, the results contained in Theorems \ref{vavava} and \ref{babel} show that the output of the single receiving neuron $v$ is consistent with \textbf{H2} and \textbf{H3} on the input signal. 

        We move now to the complete problem of proving input-output consistency for a vector of receiving neurons. Specifically, we need to prove \textbf{H1} and \textbf{H4} for the output of the receiving neurons. To do so it is enough proving it for a vector of size two.

        We start with \textbf{H1}, proving \eqref{duo} for the vector of the output ISIs $(Z_A,Z_B)$ relative to both receiving neurons $v_A$ and $v_B$. \textbf{H1} is proven by the following theorem:
        \begin{Theorem}\label{tazzina}
            The  vector $(Z_{A,j},Z_{B,k})$ with fixed $j,k \in \mathbb{N}^*$ of output ISIs including a common firing event (i.e.\ the marginals are temporally superimposed) is distributed as a regularly-varying random
            vector of parameter $\alpha$.
        \end{Theorem}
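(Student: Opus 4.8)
The plan is to reduce the statement to the single-big-jump behaviour of the pooled-input waiting times $\tau_k$ (recall Theorem \ref{cpc}, \eqref{tastiera}, and the asymptotic equivalence of the $\tau_k$) and then to identify the limiting measure appearing in the defining condition \eqref{duo}. First I would make the shared structure of the two output ISIs explicit. Let $(\tau_h)_{h\ge 1}$ be the inter-arrival times of the pooled input flow over $A\cup B$. Both $Z_{A,j}$ and $Z_{B,k}$ are sums of consecutive terms of this single sequence, over index ranges determined by the embedded biased walks $R^A,R^B$; the hypothesis that the marginals are temporally superimposed says precisely that these two ranges overlap. Write $I$ for the overlapping block of indices, $I_A$ (resp.\ $I_B$) for the indices that belong only to $Z_{A,j}$ (resp.\ only to $Z_{B,k}$), and $K=|I|$, $K_A=|I_A|$, $K_B=|I_B|$. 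As in the proof of Theorem \ref{vavava}, $(K,K_A,K_B)$ is independent of $(\tau_h)_h$ (conditioning, when a walk has negative drift, on the relevant boundaries being attained) and has light, geometric-type tails, being built from hitting times of biased random walks; in particular all its moments are finite.

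Conditioning on $(K,K_A,K_B)=(k,k_A,k_B)$, the core estimate is the asymptotics, as $x\to\infty$, of
\[
\frac{\mathbb{P}\bigl(Z_{A,j}\in xC_A,\ Z_{B,k}\in xC_B\bigr)}{\mathbb{P}(\tau_1>x)},
\]
for continuity sets $C_A,C_B\subset(0,\infty]$ bounded away from the origin, where now $Z_{A,j}=\sum_{h\in I}\tau_h+\sum_{h\in I_A}\tau_h$ and $Z_{B,k}=\sum_{h\in I}\tau_h+\sum_{h\in I_B}\tau_h$. Here the principle of a single big jump applies: since each $\tau_h\in\mathrm{RV}(\alpha)$ is asymptotically equivalent to $\tau_1$ and the $\tau_h$ are pairwise asymptotically independent (Theorem \ref{cpc}, Definitions \ref{aaa} and \ref{aaaa}), the probability that two distinct $\tau_h$ are simultaneously of order $x$ is $o(\mathbb{P}(\tau_1>x))$, while the remaining summands are stochastically bounded and hence negligible at scale $x$; consequently the above ratio equals, up to a vanishing term, the sum over the position of the unique large summand of the corresponding single-summand contributions. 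A large $\tau_h$ with $h\in I$ inflates both sums and pushes $(Z_{A,j},Z_{B,k})$ along the diagonal; one with $h\in I_A$ inflates only the first coordinate, one with $h\in I_B$ only the second. Using $\lim_{x\to\infty}\mathbb{P}(\tau_1>tx)/\mathbb{P}(\tau_1>x)=t^{-\alpha}$ and Lemma 3.1 of \cite{MR2281913}, the conditional limit is $k\,\nu_{\mathrm d}(C_A,C_B)+k_A\,\nu_{1}(C_A)+k_B\,\nu_{2}(C_B)$, with $\nu_{\mathrm d}$ supported on the diagonal and $\nu_1,\nu_2$ on the two coordinate axes, all $(-\alpha)$-homogeneous.

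It remains to remove the conditioning by dominated convergence in $(k,k_A,k_B)$. Potter's bounds for $\mathbb{P}(\tau_1>\cdot)$ together with a Kesten-type tail bound for sums of regularly-varying summands and a union bound over the position of the large summand dominate the pre-limit ratios by $\mathrm{const}\cdot(k+k_A+k_B)^{1+\varepsilon}$ for arbitrarily small $\varepsilon>0$, which is integrable against the law of $(K,K_A,K_B)$. Taking expectations yields \eqref{duo} for $(Z_{A,j},Z_{B,k})$ with the non-null Radon limiting measure $\nu=\mathbb{E}[K]\,\nu_{\mathrm d}+\mathbb{E}[K_A]\,\nu_1+\mathbb{E}[K_B]\,\nu_2$, which satisfies $\nu(t\,\cdot)=t^{-\alpha}\nu(\cdot)$; hence $(Z_{A,j},Z_{B,k})\sim\mathrm{MRV}(\alpha)$, so that \textbf{H1} is inherited by the output. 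Projecting onto either coordinate re-derives Theorem \ref{vavava}, which is a useful consistency check.

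The main obstacle is this last step: the uniform domination needed to interchange $x\to\infty$ with the expectation over the random number of summands, i.e.\ making the single-big-jump reduction quantitative and uniform in $k,k_A,k_B$ while simultaneously handling the bivariate structure and the ``two big jumps'' estimate from pairwise asymptotic independence (which, being only an $o(\cdot)$ statement in Definitions \ref{aaa} and \ref{aaaa}, must be upgraded to something uniform). A secondary, more bookkeeping-type difficulty is to justify that $(K,K_A,K_B)$ is genuinely independent of $(\tau_h)_h$ — equivalently, that the identity of the firing neuron at each pooled event is independent of the corresponding pooled waiting time — and to keep track of the overlap combinatorics for general indices $j,k$.
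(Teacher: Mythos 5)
Your plan is a genuinely different route from the paper's, but as written it has a real gap at exactly the point you flag, and the tools you invoke there do not apply under the paper's hypotheses. Kesten-type tail bounds (and the uniform-in-$k$ single-big-jump estimates they would feed) are proved for independent, indeed i.i.d., subexponential summands; here the $\tau_h$ are dependent and only \emph{pairwise asymptotically independent} (Theorem \ref{cpc}), and that property is a non-uniform $o(\mathbb{P}(W_\tau>x))$ statement for each fixed pair, so neither the ``two big jumps are negligible'' step nor the domination of the pre-limit ratios by $\mathrm{const}\cdot(k+k_A+k_B)^{1+\varepsilon}$ follows from anything available in the paper. The paper's own substitute for Kesten's bound in this dependent, randomly weighted setting is Theorem 2 of \cite{MR2745413}, but that result is one-dimensional: it gives the tail of a single randomly stopped sum (this is how Theorem \ref{vavava} is proved) and has no bivariate counterpart that would control the joint tail $\mathbb{P}(Z_{A,j}\in xC_A,\,Z_{B,k}\in xC_B)$ uniformly in the numbers of summands. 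In addition, your decomposition needs the triple $(K,K_A,K_B)$ to be independent of $(\tau_h)_h$; this is strictly stronger than the paper's stated assumption (only the marginal count $M$ is postulated independent of the $\tau$'s), and in this model the identity of the neuron firing at a pooled event --- hence the increments of the embedded walks and the overlap counts --- is the argmin of the forward recurrence vector, so such independence is an extra modelling postulate that must be stated, not a bookkeeping remark. So the limit measure you describe (diagonal plus the two axes, which is at least consistent with the full-dependence statement of Theorem \ref{ark} since the overlap block is nonempty) is plausible, but the passage to \eqref{duo} is not established.

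For contrast, the paper proves Theorem \ref{tazzina} by a much shorter argument that bypasses all of this: it regards the IF mechanism as a measurable map $g$ with $g(\bm{T_1})=(Z_A,Z_B)$ and $g(a\bm{T_1})=a\,g(\bm{T_1})$ for $a>0$, rewrites the ratio in \eqref{duo} for $(Z_A,Z_B)$ as $\mathbb{P}(x^{-1}\bm{T_1}\in g^{-1}([\bm{0},\bm{t}]^c))/\mathbb{P}(x^{-1}\bm{T_1}\in g^{-1}([\bm{0},\bm{1}]^c))$, and transfers the input limiting measure to $\bar{\mu}=\mu\circ g^{-1}$, the marginal regular variation being supplied by Theorem \ref{vavava}. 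Your approach, if completed, would buy more --- an explicit description of the spectral measure of the output vector in terms of the overlap structure, which the paper's homogeneity argument does not give --- but to complete it you would need either (i) a uniform (in the number of summands) version of the one-large-summand and two-large-summands estimates valid for the dependent $\tau$'s, e.g.\ additional assumptions in the spirit of those of \cite{MR2745413} formulated jointly for the two coordinates, and (ii) an explicit hypothesis on the joint law of $(K,K_A,K_B)$ and its independence from the waiting times. Without those, the interchange of the limit $x\to\infty$ with the expectation over the counts, i.e.\ the last step of your argument, is unsupported.
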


        Furthermore, next theorem proves that $(Z_{A,j},Z_{B,k})$ exhibits full dependence (i.e.\ \textbf{H4} holds).
        \begin{Theorem}\label{ark}
            The  vector $(Z_{A,j},Z_{B,k})$ with fixed $j,k \in \mathbb{N}^*$ of output ISIs including a common firing event
            shows full dependence in the sense of hypothesis \textbf{H4}. 
        \end{Theorem}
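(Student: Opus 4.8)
The plan is to read Theorem~\ref{ark} as the statement \textbf{H4} applied to the output pair and to reduce it to a single positivity assertion. By Theorem~\ref{tazzina} the vector $(Z_{A,j},Z_{B,k})$ is regularly-varying of order $\alpha$, and (as recalled after \eqref{ziai}, together with the consistency of \textbf{H2} already established) each marginal is regularly-varying of order $\alpha$ and asymptotically equivalent to a common positive random variable $W$. Multivariate regular variation then guarantees that
\[
\ell \;=\; \lim_{z\to\infty}\frac{\mathbb{P}(Z_{A,j}>z,\;Z_{B,k}>z)}{\mathbb{P}(W>z)}
\]
exists in $[0,\infty)$, and it is finite because $\mathbb{P}(Z_{A,j}>z,Z_{B,k}>z)\le \mathbb{P}(Z_{A,j}>z)\sim \mathrm{const}\cdot\mathbb{P}(W>z)$. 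Hence the whole content of the theorem is the strict positivity $\ell>0$, i.e.\ that the limit (spherical) measure of $(Z_{A,j},Z_{B,k})$ charges the cone $\{x_1>1,\,x_2>1\}$ and the pair is \emph{not} asymptotically independent.

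For the lower bound I would exhibit the single-big-jump mechanism that transfers the full dependence of the input to the output. Write $Z_{A,j}=\sum_{h=M^A_{j-1}+1}^{M^A_j}\tau_h$ and $Z_{B,k}=\sum_{h=M^B_{k-1}+1}^{M^B_k}\tau'_h$, where $(\tau_h)_h$ and $(\tau'_h)_h$ are the inter-event waiting times of the pooled input flows of $A$ and of $B$; when $p<1/2$ all passage times are understood conditionally on $\{M<\infty\}$, which leaves the tail asymptotics untouched and keeps the relevant expectations finite. Since the two ISIs include a common firing event, their accumulation windows overlap in real time; fix a physical instant in this overlap and the corresponding input inter-arrival indices. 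If one underlying input interval $T^i_{h_0}$ straddling that instant is of order $z$, then by \textbf{H4} for the input \emph{all} components $T^\ell_{h_0}$ are simultaneously of order $z$; consequently the $\tau$-increment in the $A$-flow and the $\tau'$-increment in the $B$-flow covering that instant --- each a minimum over a finite set of quantities of order $z$ --- are both of order $z$ and both lie inside the respective summation windows, forcing $Z_{A,j}>z$ and $Z_{B,k}>z$. Estimating, for a suitable index configuration,
\[
\mathbb{P}(Z_{A,j}>z,\,Z_{B,k}>z)\;\ge\; q\;\mathbb{P}\bigl(T^1_{h_0}>z,\dots,T^n_{h_0}>z\bigr)\;\sim\; q\,c_{h_0}\,\mathbb{P}(W_T>z),
\]
where $q>0$ is the probability of that (finite, since $\mathbb{E}M<\infty$) passage-time/index configuration and $\mathbb{P}(W_T>z)\asymp\mathbb{P}(W>z)$ by \textbf{H2} and regular variation. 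This gives $\liminf_{z\to\infty}\mathbb{P}(Z_{A,j}>z,Z_{B,k}>z)/\mathbb{P}(W>z)>0$, hence $\ell>0$. (When $A\cap B\neq\emptyset$ the same big interval is literally injected into both pools; when $A\cap B=\emptyset$ one genuinely uses the full cross-dependence of the whole input vector guaranteed by \textbf{H4}.)

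Combining $\ell<\infty$, the existence of $\ell$, and $\ell>0$ yields $\lim_{z\to\infty}\mathbb{P}(Z_{A,j}>z,Z_{B,k}>z)/\mathbb{P}(W>z)=\ell\in(0,\infty)$, which is exactly hypothesis \textbf{H4} for the output pair. The main obstacle I expect is rigour in the lower bound: the two pooled streams have distinct event times, so one must align the misaligned increments of $(\tau_h)$ and $(\tau'_h)$ inside the overlap window, show that the location of the big increment is asymptotically independent of its size, and discard configurations with two or more moderately large increments (which contribute $o(\mathbb{P}(W>z))$). All of this follows from the principle of a single big jump for sums of regularly-varying random vectors (cf.\ \cite{MR2281913}, Section~5.4), the dominated-convergence bookkeeping already employed for Theorems~\ref{vavava} and~\ref{tazzina}, and carrying the conditioning on $\{M<\infty\}$ throughout when $p<1/2$.
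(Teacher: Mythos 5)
Your overall strategy is the same as the paper's: reduce the theorem to strict positivity of the joint-tail ratio, obtain the lower bound from a single common input configuration made simultaneously large via \textbf{H4}, and then do tail-equivalence bookkeeping between the controlling variables ($W_T$, $W_\Theta$, $W_Z$). However, your key displayed estimate
\begin{align*}
\mathbb{P}(Z_{A,j}>z,\,Z_{B,k}>z)\;\ge\; q\,\mathbb{P}\bigl(T^1_{h_0}>z,\dots,T^n_{h_0}>z\bigr)
\end{align*}
has a genuine gap. Largeness of the full straddling inter-arrival intervals $T^\ell_{h_0}$ does not force the two covering increments to be large: the increment of the pooled $A$-flow (resp.\ $B$-flow) covering your chosen instant is the minimum of the \emph{forward recurrence} times measured from the last pooled-$A$ (resp.\ pooled-$B$) event, i.e.\ only the residual portions of those intervals. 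If the instant happens to sit near the end of the straddling intervals, these residuals can all be small even though every $T^\ell_{h_0}$ exceeds $z$, so the implication ``all straddling intervals of order $z$ $\Rightarrow$ $Z_{A,j}>z$ and $Z_{B,k}>z$'' fails as stated. The paper never works with the $T$'s directly: it bounds the two sums from below by a single pair of temporally superimposed increments $\tau_{A,*}$, $\tau_{B,\circ}$, writes these as minima of the forward recurrence vectors $\bm{\Theta_{A,*}}$, $\bm{\Theta_{B,\circ}}$ (which live inside one common inter-arrival vector $\mathcal{T}$), and relies on the transfer of full dependence and asymptotic equivalence to forward recurrence vectors in the spirit of the shift argument of Theorem \ref{papap}. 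A related misreading: \textbf{H4} does not say that one large component makes \emph{all} components large; it only compares the joint tail with $\mathbb{P}(W_T>z)$ — your final estimate uses the joint event, so this is reparable, but the narrative step is not what \textbf{H4} gives.

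The second gap is the factor $q$. The ``passage-time/index configuration'' (which input intervals straddle the instant, where the window boundaries $M_{j-1},M_j,N_{k-1},N_k$ fall) is determined by the very inter-arrival times whose joint tail you are estimating, so you cannot simply multiply by its probability; you flag this yourself (``location of the big increment asymptotically independent of its size''), but that is exactly the hard point and it is not supplied by the single-big-jump principle you cite. The paper avoids it entirely: it conditions only on the values of the $M$'s and $N$'s, which are functions of the embedded random walk of marks and hence independent of the $(\tau_k)$ by construction, and then uses the pathwise inequality $\sum_{h}\tau_{A,h}\ge\tau_{A,*}$ inside each window, so no configuration probability appears at all. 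Finally, a minor point: Theorem \ref{tazzina} does not by itself give existence of the limit of $\mathbb{P}(Z_{A,j}>z,Z_{B,k}>z)/\mathbb{P}(W>z)$ at the corner set without a continuity-point argument; like the paper, what your scheme really delivers is a two-sided comparison, and it would be cleaner to present it as such.
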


    \section{Conclusions}\label{conc}
    
        We have described a neuronal model characterized by ISIs distributions exhibiting power-law decaying tails. Our results show that the IF paradigm is able to transform the input to neurons into an output preserving the dependence structure and the asymptotic behaviour of the ISIs distributions.
        The check of this consistency is disregarded in many modelling instances, but it seems unavoidable wishing to consider a model in which the output of some neurons becomes the input for others. Our description differs from others in that we have privileged asymptotic features of the ISIs distribution using regularly-varying vectors and the property of asymptotic independence.
        The latter concept is introduced for technical reasons to prove the wished results. However, it seems acceptable that ISIs of the same spike train become asymptotically independent. Indeed, it is highly unlikely for long ISIs to be shortly followed by other long ISIs.
        Other studies focus on a more quantitative analysis. For this goal they simplify the description of the dependence to the evaluation of correlations \cite{rosenbaum2010pooling}. 

        The considered model is clearly over-simplified with respect to many others appeared in the literature, as it disregards spontaneous decay of the membrane potential, the existence of reversal potentials and many other features. However, it  allows to introduce dependencies between ISIs of each spike train and between ISIs of different spike trains, a property generally studied only through simulations.

        This work is a first step in the study of the consistency in the network and focuses on the transmission in the internal part of the network. Actually, the considered network structure is of feedforward type. Improvements of the model should include more properties of neurons, retaining the consistency between input and output.
        The biological advantage of ISIs with heavy-tailed distributions may be a subject of discussion. We limit ourselves to mention an increasing of robustness of the code related to a decreased sensibility to the possible loss of a single long ISI.
        
        There are instances in which a first layer of neurons receives external inputs that could be Poissonian (see e.g.\ \cite{Duchamp-Viret200597}).
        In similar cases it may happen that successive layers of neurons transform the input generating an output that is characterized by heavy tails, which in turn feeds next layers following the dynamics described in the present paper. Hence, we plan future studies focusing on recognizing possible paradigms determining this transformation.
        Moreover, the present model considers only the tail behaviour of ISIs distributions. We plan to make use of simulations to investigate different patterns generated by models having the described asymptotic properties.

        We add that the interest of the obtained results is not limited to applications to neuroscience but they could result interesting also in the case of networks used in machine learning and artificial intelligence.

    \subsubsection*{Acknowledgements}
    
        L.\ Sacerdote and F.\ Polito thank B.\ Lindner for his remarks on a preliminary version of the work that helped us to improve the paper.

	\begin{appendices}
	
	\section{Mathematical background}
	
	    \label{appe}	
	    Let us describe here the basic mathematical tools we made use of for the description of the model and of its properties. Recall that we are interested mainly in the tail behaviour of random vectors.
	    As basic references for the definitions and properties listed here we suggest for instance to consult \cite{basrak2000sample,MR1925445,MR2281913}. 
	    
	    We denote by $\mathbb{S}^{d-1}$, $d \in \mathbb{N}$, the $(d-1)$-dimensional unit sphere embedded in $\mathbb{R}^d$, that is for example $\mathbb{S}^0 = \{-1,1\}$, $\mathbb{S}^1$ is the unit circle, and so forth. Furthermore we denote by $\mathcal{B}(\mathbb{S}^{d-1})$ the Borel sigma algebra on $\mathbb{S}^{d-1}$.
	    
	    \begin{Definition}
            \label{def1}
    	    Given a probability space $(E,\mathcal{F},\mathbb{P})$, a $d$-dimensional random vector $\bm{X}$,
            defined on it and taking values in $\mathbb{R}^d \backslash \{\bm{0}\}$ is said to be \emph{regularly-varying of order $\alpha$}, $\alpha \ge 0$ if there exist a probability measure $\nu$ on the unit $(d-1)$-dimensional sphere such that for
            any strictly positive $t$ and any set $S \in \mathcal{B}(\mathbb{S}^{d-1})$, it holds that
            \begin{align}
                & \label{f1} \lim_{x \to \infty} \frac{\mathbb{P}(\| \bm{X}\|>tx)}{\mathbb{P}(\|\bm{X}\|>x)}= t^{-\alpha}, \\
                & \label{f2} \lim_{x \to \infty} \mathbb{P} \left( \left.\frac{\bm{X}}{\|\bm{X}\|}\in S\right| \|\bm{X}\|>x\right) = \nu(S),
            \end{align}
            where $\|\cdot \|$ is a given norm on $\mathbb{R}^d$.
	    \end{Definition}
	    
	    \begin{Remark}
	        \label{remcoe}
	        Definition \ref{def1} can be rewritten in an equivalent form which is more frequently given in the literature. Formulae \eqref{f1} and \eqref{f2} hold if and only if for
            every strictly positive $t$ and any set $S \in \mathcal{B}(\mathbb{S}^{d-1})$,
	        \begin{align}
	            \label{coelho}
	            \lim_{x \to \infty} \frac{\mathbb{P}\left(\| \bm{X}\|>tx, \frac{\bm{X}}{\|\bm{X}\|}\in S\right)}{\mathbb{P}(\|\bm{X}\|>x)} = t^{-\alpha} \nu(S).
	        \end{align}
	        Note that formula \eqref{coelho} defines a limiting infinite measure $\mu$ on $(\mathbb{R}^d\backslash \{\bm{0}\}, \mathcal{B}(\mathbb{R}^d\backslash \{\bm{0}\}))$ through the $\pi$-system $\mathcal{C}=\{A_{t,S}\}$, $S \subseteq \mathbb{S}^{d-1}$, $t > 0$, where $A_{t,S}= (t,\infty)\times S$, and $\mu(A_{t,S})= t^{-\alpha}\nu(S)$.
	    \end{Remark}
	    
	    \begin{Remark}
	        Interestingly enough, from Definition \ref{def1} and Remark \ref{remcoe}, the limiting measure $\mu$ factorizes into the two components \eqref{f1} and \eqref{f2}. This relates to the fact that, in the tails, the radial and spherical parts of the $d$-dimensional vector $\bm{X}$ become independent.
	    \end{Remark}
	    
	    We describe now, an alternative and equivalent definition which proves to be useful when dealing with random vectors made up of forward recurrence time intervals of regularly-varying waiting times. For it, we refer to \cite{basrak2000sample,MR900810}.
	    
        \begin{Definition}
	        \label{nyman}

            The random vector $\bm{X}$ taking values in $[\bm{0},\bm{\infty})$, with joint distribution function $F$, is said to be regularly-varying if it exists a Radon measure $\mu$ on $[\bm{0},\bm{\infty}]\backslash \{\bm{0}\}$ such that
            \begin{align}\label{duo}
                \lim_{x \to \infty} \frac{1-F(x\bm{t})}{1-F(x\bm{1})} = \lim_{x \to \infty} \frac{\mathbb{P}(x^{-1}\bm{X} \in [\bm{0},\bm{t}]^c)}{\mathbb{P}(x^{-1}\bm{X} \in [\bm{0},\bm{1}]^c)} = \mu ([\bm{0},\bm{t}]^c)
            \end{align}
            for every $\bm{t}\in[\bm{0},\bm{\infty})\backslash \{\bm{0}\}$ point of continuity for $\mu([\bm{0},\cdot]^c)$.
	    \end{Definition}
	    
	    \begin{Remark}
	        In dimension one, from Definition \ref{def1} or formula \eqref{coelho}, a real-valued random variable $X$ is said to be regularly-varying of order $\alpha \ge 0$ if and only if for every $t>0$, and every subset $S$ of $\mathbb{S}^0 = \{-1,1\}$,
	        \begin{align}
	            \lim_{x \to \infty} \frac{\mathbb{P}(|X|>tx, X/|X| \in S)}{\mathbb{P}(|X|>x)} = t^{-\alpha}\nu(S),
	        \end{align}
	        actually meaning that the following hold:
	        \begin{align}
	            \label{trio}
	            & \lim_{x \to \infty} \frac{\mathbb{P}(|X|>tx)}{\mathbb{P}(|X|>x)}=t^{-\alpha}, \quad
	            \lim_{x \to \infty} \frac{\mathbb{P}(X> tx)}{\mathbb{P}(|X|>x)} = pt^{-\alpha}, \quad
	            \lim_{x \to \infty} \frac{\mathbb{P}(X<- tx)}{\mathbb{P}(|X|>x)}= qt^{-\alpha},
	        \end{align}
	        where $p= \nu(\{1\})$, $q=\nu(\{-1\})$.
	        
	        If $X$ is almost surely strictly positive then formulae \eqref{trio} and \eqref{duo} boil down to checking that
	        \begin{align}
	            \lim_{x \to \infty} \frac{\mathbb{P}(X > tx)}{\mathbb{P}(X>x)} = t^{-\alpha}, \qquad \forall \: t>0.
	        \end{align}
	    \end{Remark}

        \begin{Definition}\label{asinteq}
            Let $X$, $Y$ be real-valued, positive random variables. We say they are asymptotically equivalent if
            \begin{align}
                \lim_{x \to \infty}\frac{\mathbb{P}(X > x)}{\mathbb{P}(Y > x)} = c \in (0,\infty).
            \end{align}
        \end{Definition}	    
        \begin{Definition}\label{aaa}
            Let $(X_i)_{i=1}^n$ be a finite sequence of positive and asymptotically equivalent random variables. Let $W$ be a positive random variable such that
            \begin{align}
                \frac{\mathbb{P}(X_i>x)}{\mathbb{P}(W>x)} \to c_i \in (0, \infty),
                \qquad i \in \{1,\dots,n\},
            \end{align}
            as $x \to \infty$. If
            \begin{align}
                \frac{\mathbb{P}(X_j>x,X_k>x)}{\mathbb{P}(W>x)} \to 0,
                \qquad \text{for every } j\ne k,
            \end{align}
            as $x \to \infty$,
            then the random variables $(X_i)_{i=1}^n$ are said to be pairwise asymptotically independent (or upper-tail independent).
        \end{Definition}
        \begin{Definition}\label{aaaa}
            Let $(X_i)_{i=1}^\infty$ be an infinite sequence of positive random variables. If the subsequences $(X_{i_1}, \dots, X_{i_k})$, for every possible choice of the $k$-tuple $(i_1, \dots, i_k)$,  for every $k \in \{2, 3,\dots\}$,  are composed by pairwise asymptotically independent random variables, then the random variables $(X_i)_{i=1}^\infty$ are said to be pairwise asymptotically independent (or upper-tail independent).
        \end{Definition}

    \section{Proofs}
	
	   \begin{proof}[\textbf{Proof of Theorem \ref{papap}}]
    	    To check that formula \eqref{duo} is valid for $\bm{\Theta_1}$, fix $\bm{s} \in \mathbb{R}_+^{\overline{n}}\backslash \{\bm{0}\}$, and for every $a \in \mathbb{R}_+^*$ consider the ratio
	        \begin{align}
	            \frac{1-\mathbb{P}(\bm{\Theta_1}\le a \bm{s})}{1-\mathbb{P}(\bm{\Theta_1} \le a \bm{1})} & = \frac{\mathbb{P}(\bm{\Theta_1} \in [\bm{0},a\bm{s}]^c)}{\mathbb{P}(\bm{\Theta_1} \in [\bm{0},a\bm{1}]^c)} \\
	            & = \frac{\mathbb{P}(\bm{T_1} \in [\bm{0},a\bm{s}+\bm{t}]^c | \bm{T_1}>\bm{t})}{\mathbb{P}(\bm{T_1} \in [\bm{0},a\bm{1}+\bm{t}]^c | \bm{T_1}>\bm{t})}, \notag
	        \end{align}
	        where we have considered the relationship between $\bm{\Theta_1}$ and $\bm{T_1}$. Hence,
	        \begin{align}
	            \frac{1-\mathbb{P}(\bm{\Theta_1}\le a \bm{s})}{1-\mathbb{P}(\bm{\Theta_1} \le a \bm{1})} & = \frac{\mathbb{P}(\bm{T_1} \in [\bm{0},a\bm{s}+\bm{t}]^c , \bm{T_1}>\bm{t})}{\mathbb{P}(\bm{T_1} \in [\bm{0},a\bm{1}+\bm{t}]^c , \bm{T_1}>\bm{t})} \\
	            & = \frac{\mathbb{P}(\bm{T_1}\in[\bm{t},\bm{\infty})\backslash [\bm{t}, a\bm{s}+ \bm{t}])}{\mathbb{P}(\bm{T_1}\in[\bm{t},\bm{\infty})\backslash [\bm{t}, a\bm{1}+ \bm{t}])} \notag \\
	            & =\frac{\mathbb{P}(\bm{T_1}-\bm{t}\in[\bm{0},\bm{\infty})\backslash [\bm{0}, a\bm{s}])}{\mathbb{P}(\bm{T_1}-\bm{t}\in[\bm{0},\bm{\infty})\backslash [\bm{0}, a\bm{1}])} \notag \\
	            & = \frac{1-\mathbb{P}(\bm{T_1}-\bm{t} \le a\bm{s})}{1-\mathbb{P}(\bm{T_1}-\bm{t} \le a\bm{1})}. \notag
	        \end{align}
            Applying Lemma 3.12 of \cite{MR2281913} with $\bm{X_1} = \bm{T_1}$ and $\bm{X_2} = - \bm{t}$, it is immediate to conclude that $\bm{\Theta_1}$ is regularly-varying distributed of order $\alpha$.
    	    Moreover, its marginals are asymptotically equivalent being shifts of the originary inter-arrival times.
    	\end{proof}
    	
		\begin{proof}[\textbf{Proof of Theorem \ref{cpc}}]
			To prove the property we need to show that, for each $h,j \in \mathbb{N}^*$, 
	    	\begin{align}\label{teiera}
	    		\frac{\mathbb{P} (\tau_h>z, \tau_{h+j} > z)}{\mathbb{P} (\tau_h>z)} \underset{z \to \infty}{\longrightarrow} 0.
	    	\end{align}
	    	
	    	We first fix the necessary notation: let $v_{m_1}$ and $v_{m_2}$ be the neurons responsible of the $h$-th and $(h+j)$-th firing events, respectively; denote by $\mathcal{Q}^{m_1}$ and $\mathcal{P}^{m_1}$ the inter-arrival times of $v_{m_1}$ containing respectively $\Theta_h^{m_1}$ and $\Theta_{h+j}^{m_1}$.
	    	Note that $\mathcal{Q}^{m_1}$ and $\mathcal{P}^{m_1}$ do not coincide and both belong to $S^{m_1}$ which is
		    a sequence of pairwise asymptotic independent and marginally regularly-varying random variables of order $\alpha$.
	    	A similar notation is used for the analogous quantities $\mathcal{Q}^{m_2}$ and $\mathcal{P}^{m_2}$ relative to the neuron $v_{m_2}$.
	    	
	    	Our first step consists in studying the numerator of \eqref{teiera}. We rewrite the joint survival function introducing a conditioning on what happens to the forward recurrence time intervals $(\Theta_h^{m_2}, \Theta_{h+j}^{m_2})$ relative to the neuron responsible of the $(h+j)$-th firing event.  We have
			\begin{align}\label{biquattro}
				\mathbb{P} (\tau_h>z, \tau_{h+j} > z) = \mathbb{P}(\Theta_h^{m_2}>z, \Theta_{h+j}^{m_2}>z)p^{m_2}_{h,h+j}(z),
			\end{align}
			where
			\begin{align}
				p^{m_2}_{h,h+j}(z) &= \mathbb{P} \Bigl(\Theta_h^1>z, \dots, \Theta_h^{m_2-1}>z,\Theta_h^{m_2+1}>z,\dots, \Theta_h^{\overline{n}}>z , \\
				&\Theta_{h+j}^1>z, \dots, \Theta_{h+j}^{m_2-1}>z, \Theta_{h+j}^{m_2+1}>z,\dots, \Theta_{h+j}^{\overline{n}}>z | \Theta_h^{m_2}>z, \Theta_{h+j}^{m_2}>z\Bigr). \notag
			\end{align}
			Let $\mathcal{W}$ be a suitable r.v.\  controlling the tail of the components of $S^{m_2}$, we can write that
			\begin{align}\label{zero}
			\frac{\mathbb{P}(\tau_h>z,\tau_{h+j}>z)}{p^{m_2}_{h,h+j}(z)\mathbb{P}(\mathcal{W}>z)}
				= \frac{\mathbb{P}(\Theta_h^{m_2}>z, \Theta_{h+j}^{m_2}>z)}{\mathbb{P}(\mathcal{W}>z)} \le \frac{\mathbb{P}(\mathcal{Q}^{m_2}>z, \mathcal{P}^{m_2}>z)}{\mathbb{P}(\mathcal{W}>z)},
			\end{align}
			where the first equality comes by applying equation \eqref{biquattro} and the last inequality uses the stochastic ordering between the inter-arrival times $(\mathcal{Q}^{m_2},\mathcal{P}^{m_2})$ and the corresponding forward recurrence time intervals $(\Theta_h^{m_2},\Theta_{h+j}^{m_2})$.  
			Recalling that $S^{m_2}$ is a sequence of pairwise asymptotically independent random variables, the right-hand side of \eqref{zero} tends to zero as $z \to \infty$. Hence,
			\begin{align}\label{due}
			    \frac{\mathbb{P}(\tau_h>z,\tau_{h+j}>z)}{p^{m_2}_{h,h+j}(z)\mathbb{P}(\mathcal{W}>z)}     \underset{z \to \infty}{\longrightarrow} 0.
	    	\end{align}
	    	
	    	Our second step considers the tail behaviour of the denominator of \eqref{teiera}, i.e.\ of the marginal survival function $\mathbb{P} (\tau_h>z)$.
	    	Note that $\mathcal{W}$, given the asymptotic equivalence condition, controls also the tail of the components of $S^{m_1}$. Hence, for constants $k_h>0$, using the asymptotic equivalence between $\mathcal{Q}^{m_1}$ and $\Theta_h^{m_1}$, we have,
			\begin{align}\label{uno}
				k_h = \lim_{z \to \infty} \frac{\mathbb{P}(\mathcal{Q}^{m_1}>z)}{\mathbb{P}(\mathcal{W}>z)}=\lim_{z \to \infty} \frac{\mathbb{P}(\Theta_h^{m_1}>z)}{\mathbb{P}(\mathcal{W}>z)} =\lim_{z \to \infty} \frac{\mathbb{P}(\tau_h > z)}{\mathbb{P}(\mathcal{W}>z) p_{h}^{m_1}(z)},
			\end{align}
			where
			\begin{align}\label{aa}
				p_h^{m_1}(z) = \mathbb{P}(\Theta_h^1>z, \dots, \Theta_h^{m_1-1}>z,\Theta_h^{m_1+1}>z,\dots, \Theta_h^{\overline{n}}>z | \Theta_h^{m_1}>z).
			\end{align}
			Considering \eqref{uno} and \eqref{due},
	    	we have that for each $h,j \in \mathbb{N}^*$, 
	    	\begin{align}
	    		\frac{\mathbb{P} (\tau_h>z, \tau_{h+j} > z)}{\mathbb{P} (\tau_h>z)} \underset{z \to \infty}{\longrightarrow} 0.
	    	\end{align}
	    	This result follows by noting that  $p_h^{m_1}(z)/p_{h,h+j}^{m_2}(z)$ does not converge to zero as $z \to \infty$.
	    	Indeed, the denominator is bounded by one, while the numerator \eqref{aa} cannot converge to zero as the waiting times for different neurons, and hence the corresponding forward recurrence time intervals, are taken to be fully dependent (\textbf{H4}).
	    \end{proof}

        \begin{proof}[\textbf{Proof of Theorem \ref{vavava}}]
        
            For the proof we will mainly rely on Theorem 2 of \cite{MR2745413} giving the tail behaviour of series of randomly weighted asymptotically independent random variables. 
            This is possible as we can rewrite the random sum defining $Z$ as an infinite sum with random coefficients,
            \begin{align}\label{cc}
                Z= \sum_{k=1}^M \tau_k = \sum_{k=1}^\infty \tau_k \xi_k,
            \end{align}
            where $\xi_k = \mathbb{1}_{\{1,\dots,M\}}(k)$ is a sequence of dependent Bernoulli random variables, independent of the sequence $(\tau_k)_k$.
            For each $k \in \mathbb{N}^*$, denote by $\bar{F}_k(t)$, the survival function $\mathbb{P}(\tau_k > t)$ of $\tau_k$. We check now the hypotheses of Theorem 2 of \cite{MR2745413}. We note that the assumptions $A1$ and $A2$ are trivially verified. Moreover, since $(\tau_k)_k$ is a sequence of dependent random variables each of them regularly-varying distributed of common order $\alpha$ (see formula \eqref{tastiera}), we have that $\mathbb{J}^-_{F_k}$ is positive as
            \begin{align}
                & \mathbb{J}^-_{F_k} = -\lim_{y \to \infty}\frac{\log \left(\limsup_{t \to \infty} \frac{\bar{F}_k(ty)}{\bar{F}_k(t)}\right)}{\log y} = \alpha, \\
                & \mathbb{J}^+_{F_k} = -\lim_{y \to \infty}\frac{\log \left(\liminf  f_{t \to \infty} \frac{\bar{F}_k(ty)}{\bar{F}_k(t)}\right)}{\log y} = \alpha.
            \end{align}

            We are left to check the moment condition on the sequence $(\xi_k)_k$. Since it is a Bernoulli sequence, for every $b \in \mathbb{R}_+^*$, $k \in \mathbb{N}^*$, we have $\mathbb{E}\,\xi_k^b = \mathbb{E}\,\xi_k$. It follows that, for $\delta > 0$, and recalling that $\alpha \in (0,1)$,
            \begin{align}\label{aax}
                \sum_{k=1}^\infty \mathbb{E}\,\xi_k^{\alpha-\delta}
                = \sum_{k=1}^\infty \mathbb{E}\,\xi_k^{\alpha+\delta} = \sum_{k=1}^\infty \mathbb{E}\,\xi_{k} 
                = \sum_{k=1}^\infty \mathbb{P}(M \ge k)
                = \mathbb{E} M.
            \end{align}
            The finiteness of $\mathbb{E}M$ for $p > 1/2$ or of the conditional expectation $\mathbb{E}[M|M<\infty]$ for $p < 1/2$ ensures the fineteness of \eqref{aax}.

            Since in our case
            \begin{align}
                L = \bigwedge_k \lim_{y \to 1+} \left(\liminf_{t \to \infty} \frac{\bar{F}_k(ty)}{\bar{F}_k(t)}\right) = 1,
            \end{align}
            Theorem 2 of \cite{MR2745413} implies that $\mathbb{P}(Z > x) \sim \sum_{k=1}^\infty \mathbb{P}(\tau_k \xi_k> x)$.

            In the final step we prove regular variation for the latter series. Notice that $\mathbb{P}(\tau_k \xi_k > x) = \mathbb{P}(\xi_k =1) \mathbb{P}(\tau_k>x) = \mathbb{P}(M \ge k) \mathbb{P}(\tau_k>x) \le \mathbb{P}(M \ge k)$. Since $p \ne 1/2$, this leads to uniform convergence of $\sum_{k=1}^\infty \mathbb{P}(\tau_k \xi_k> x) \le \mathbb{E} M < \infty$, for every $x \in \mathbb{R_+}$, $\alpha \in (0,1)$. Further, since
            \begin{align}
                &\lim_{x \to \infty} \frac{\sum_{k=1}^\infty\mathbb{P}(M \ge k) \mathbb{P}(\tau_k>yx)}{\sum_{k=1}^\infty\mathbb{P}(M \ge k) \mathbb{P}(\tau_k>x)} \\
                &= \lim_{x \to \infty} \frac{\mathbb{P}(\tau_1>yx)\left(1+\sum_{k=2}^\infty\mathbb{P}(M \ge k) \frac{\mathbb{P}(\tau_k>yx)}{\mathbb{P}(\tau_1>yx)}\right)}{\mathbb{P}(\tau_1>x)\left(1+\sum_{k=2}^\infty\mathbb{P}(M \ge k) \frac{\mathbb{P}(\tau_k>x)}{\mathbb{P}(\tau_1>x)}\right)} \notag
            \end{align}
            we conclude that $Z \sim \text{RV}(\alpha)$.
        \end{proof}

        \begin{proof}[\textbf{Proof of Theorem \ref{babel}}]

            Consider a random variable $W_Z$ controlling the tails of the $Z_i$'s, let $H = \{M_k = m_k, M_r =m_r, M_{k-1}=m_{k-1}, M_{r-1}=m_{r-1}, m_{r-1}<m_r<m_{k-1}<m_k\}$, and note that
            \begin{align}\label{lavagna}
                & \frac{\mathbb{P}(Z_k>z,Z_r>z)}{\mathbb{P}(W_Z>z)} \\
                &=
                \frac{\mathbb{P}(\sum_{h=M_{k-1}+1}^{M_k} \tau_h > z, \sum_{h=M_{r-1}+1}^{M_r} \tau_h > z)}{\mathbb{P}(W_Z>z)} \notag \\
                & = \frac{1}{\mathbb{P}(W_Z>z)}
                \sum_{m_k,m_r,m_{k-1},m_{r-1}}\mathbb{P}\biggl( \sum_{h=M_{k-1}+1}^{M_k} \tau_h > z, \sum_{h=M_{r-1}+1}^{M_r} \tau_h > z \biggr| H\biggr) \mathbb{P}(H)\notag
            \end{align}
            Now, the above series is clearly uniformly convergent as the generic term of the series is bounded above by $\mathbb{P}(H)$ and $\sum_{m_k,m_r,m_{k-1},m_{r-1}}\mathbb{P}(H)=1$. This allows to take the limit term by term in \eqref{lavagna}. Hence, in order to prove asymptotic independence it is enough to check that (recall also the independence of the $M$'s with the $\tau$'s)
            \begin{align}\label{candela}
                \lim_{z \to \infty} \frac{\mathbb{P}(\sum_{h=m_{k-1}+1}^{m_k} \tau_h > z, \sum_{h=m_{r-1}+1}^{m_r} \tau_h > z)}{\mathbb{P}(W_Z>z)} =0.
            \end{align}
            Let us first shorten the notation by letting $\mathcal{H} = \{ m_{k-1}+1, \dots, m_k \}$ and $\mathcal{F} = \{ m_{r-1}+1, \dots, m_r \}$. In order to prove \eqref{candela} consider the numerator:
            \begin{align}
                \mathbb{P} &\Bigl(\sum_{h \in \mathcal{H}} \tau_h > z, \sum_{h \in \mathcal{F}} \tau_h > z\Bigr) \\
                = {} & \sum_{j,l} \mathbb{P}\Bigl(\sum_{h \in \mathcal{H}} \tau_h > z, \sum_{h \in \mathcal{F}} \tau_h > z \bigr| \tau_j \ge \max_{h \in \mathcal{H}, h\ne j} \tau_h, \tau_l \ge \max_{h \in \mathcal{F},h\ne l} \tau_h\Bigr) \notag \\
                & \times \mathbb{P}\Bigl(\tau_j \ge \max_{h \in \mathcal{H}, h\ne j} \tau_h, \tau_l \ge \max_{h \in \mathcal{F},h\ne l} \tau_h\Bigr) \notag\\
                \le {} & \sum_{j,l} \mathbb{P} \Bigl( (m_k-m_{k-1})\tau_j>z, (m_r-m_{r-1})\tau_l>z \bigr| \tau_j \ge \max_{h \in \mathcal{H}, h\ne j} \tau_h, \tau_l \ge \max_{h \in \mathcal{F},h\ne l} \tau_h \Bigr) \notag\\
                & \times \mathbb{P}\Bigl(\tau_j \ge \max_{h \in \mathcal{H}, h\ne j} \tau_h, \tau_l \ge \max_{h \in \mathcal{F},h\ne l} \tau_h\Bigr) \notag \\
                = {} & \sum_{j,l}  \mathbb{P} \Bigl( (m_k-m_{k-1})\tau_j>z, (m_r-m_{r-1})\tau_l>z , \tau_j \ge \max_{h \in \mathcal{H}, h\ne j} \tau_h, \tau_l \ge \max_{h \in \mathcal{F},h\ne l} \tau_h \Bigr) \notag \\
                \le {} & \sum_{j,l} \mathbb{P} \Bigl( (m_k-m_{k-1})\tau_j>z, (m_r-m_{r-1})\tau_l>z \Bigr). \notag \\
                \le {} & \sum_{j,l} \mathbb{P}(\tau_j > z/y, \tau_l > z/y) \notag
            \end{align}
            where in the last step $y = \max(m_k-m_{k-1}, m_r-m_{r-1})$.
            Hence,
            \begin{align}
                & \frac{\mathbb{P}(\sum_{h \in \mathcal{H}} \tau_h > z, \sum_{h \in \mathcal{F}} \tau_h > z)}{\mathbb{P}(W_Z>z)}
                \le \sum_{j,l} \frac{\mathbb{P}(\tau_j > z/y, \tau_l > z/y)}{\mathbb{P}(W_Z>z)} \\
                & = \sum_{j,l} \frac{\mathbb{P}(\tau_j > z/y, \tau_l > z/y)}{\mathbb{P}(W_\tau>z/y)} \frac{\mathbb{P}(W_\tau> z/y)}{\mathbb{P}(W_Z>z)}, \notag
            \end{align}
            where $W_\tau$ is a random variable controlling the tails of the $\tau_j$'s. Notice that the first factor of each summand goes to zero with $z\to \infty$ for the asymptotic independence of the $\tau_j$'s. Let us consider the second factor:
            \begin{align}\label{prpro}
                \frac{\mathbb{P}(W_\tau> z/y)}{\mathbb{P}(W_Z>z)}
                = \frac{\frac{\mathbb{P}(W_\tau> z/y)}{\mathbb{P}(\tau_h>z/y)}}{\frac{\mathbb{P}(W_Z>z)}{\mathbb{P}(Z_j>z)}} \frac{\mathbb{P}(\tau_h>z/y)}{\mathbb{P}(Z_j>z)},
            \end{align}
            where $Z_j$ is chosen such that the ratio
            \begin{align}
                \frac{\mathbb{P}(Z_j > z)}{\mathbb{P}(W_Z > z)}
            \end{align}
            converges to a strictly positive finite constant, as $z$ goes to infinity. Moreover,
            $\tau_h$ is chosen to be one of the addends of $Z_j$ (recall that the $\tau$'s are asymptotically equivalent).
            Note that the numerator and the denominator of the first ratio of the above formula \eqref{prpro}, as $z \to \infty$ are both positive and bounded. The last factor is positive and asymptotically bounded as well. Since the number of terms of $Z_j$ is given by $M_j - M_{j-1}$, an application of 
            Theorem 2 of \cite{MR2745413}  (see also Theorem \eqref{vavava}),  ensures that $\mathbb{P}(Z_j > z) \sim \sum_{k=1}^\infty \mathbb{P}(M_j - M_{j-1} \ge k) \mathbb{P}(\tau_k>z)$. Note that the latter series is uniformly convergent for the finiteness of the (if necessary, conditional) expectation of $M_j - M_{j-1}$ (recall $p \ne 1/2$). Hence, 
            \begin{align}
                \lim_{z \to \infty} \frac{\mathbb{P}(Z_j > z)}{\mathbb{P}(\tau_h>z/y)}
                & = \lim_{z \to \infty} \sum_{k=1}^\infty \mathbb{P}(M_j - M_{j-1} \ge k) \frac{\mathbb{P}(\tau_k>z)}{\mathbb{P}(\tau_h>z/y)} \\
                & \le \lim_{z \to \infty} \sum_{k=1}^\infty \mathbb{P}(M_j - M_{j-1} \ge k) \frac{\mathbb{P}(\tau_k>z)}{\mathbb{P}(\tau_h>z)} \notag \\
                & = c_{k,h} \mathbb{E} \big(M_j - M_{j-1}\big), \notag
            \end{align}
            where $c_{k,h} = \lim_{z \to \infty} \mathbb{P}(\tau_k>z)/\mathbb{P}(\tau_h>z)$, is finite. This proves \eqref{candela}.
        \end{proof}

        \begin{proof}[\textbf{Proof of Theorem \ref{tazzina}}]
            Consider the vector $\bm{T_1}$ and denote by $g$ the measurable transformation $g \colon \mathbb{R}^n \to \mathbb{R}^2$ such that $g(\bm{T_1}) = (Z_A,Z_B)$ representing the IF mechanism. Notice that we have
            $g(a\bm{T_1}) = ag(\bm{T_1})$ for every positive constant $a\in \mathbb{R}_+$. Hence,

            \begin{align}
                \lim_{x \to \infty} \frac{\mathbb{P}(x^{-1}(Z_A,Z_B) \in [\bm{0},\bm{t}]^c)}{\mathbb{P}(x^{-1}(Z_A,Z_B) \in [\bm{0},\bm{1}]^c)}
                & = \lim_{x \to \infty} \frac{\mathbb{P}(x^{-1}g(\bm{T_1}) \in [\bm{0},\bm{t}]^c)}{\mathbb{P}(x^{-1}g(\bm{T_1}) \in [\bm{0},\bm{1}]^c)} \\
                & = \lim_{x \to \infty} \frac{\mathbb{P}(g(x^{-1}\bm{T_1}) \in [\bm{0},\bm{t}]^c)}{\mathbb{P}(g(x^{-1}\bm{T_1}) \in [\bm{0},\bm{1}]^c)} \notag \\
                & = \lim_{x \to \infty} \frac{\mathbb{P}(x^{-1}\bm{T_1} \in g^{-1} ([\bm{0},\bm{t}]^c))}{\mathbb{P}(x^{-1}\bm{T_1} \in g^{-1} ([\bm{0},\bm{1}]^c))} \notag \\
                & = \bar{\mu} ([\bm{0},\bm{t}]^c),\notag
            \end{align}
            where $g^{-1}(B)$ is the inverse image of the Borel set $B$ of $\mathbb{R}^2$ through $g$ and $\bar{\mu} = \mu \circ g^{-1}$.
            This proves \textbf{H1} as: a) the marginal regular variation has been already proven (see Theorem \ref{vavava}); b) the above reasoning can be extended to any vector $(Z_{A,j},Z_{B,k})$, with fixed $j,k \in \mathbb{N}^*$, of output ISIs including a common event (and thus being superimposed in the time evolution of the output ISIs processes).
        \end{proof}

        \begin{proof}[\textbf{Proof of Theorem \ref{ark}}]

            To prove $\textbf{H4}$ we consider the dependent infinite sequences $(Z_{A,j})_{j \in \mathbb{N}^*}$, $(Z_{B,k})_{k \in \mathbb{N}^*}$, of ISIs relative respectively to the receiving neurons $v_A$ and $v_B$. Since their components are asymptotically equivalent, we consider a positive random variable $W_Z$ controlling the tails of the components of both sequences. In other words, for every $j,k \in \mathbb{N}^*$,
            \begin{align}
                \lim_{z \to \infty}\frac{\mathbb{P}(Z_{A,j}>z)}{\mathbb{P}(W_Z>z)} = c_{A,j} \in (0,\infty), \qquad
                \lim_{z \to \infty}\frac{\mathbb{P}(Z_{B,k}>z)}{\mathbb{P}(W_Z>z)} = c_{B,k} \in (0,\infty).
            \end{align}
            We now fix $j$ and $k$ so that $Z_{A,j}$ and $Z_{B,k}$ are temporally superimposed, i.e.\ they contain the same firing event, and check the behaviour of the joint survival function compared with that of the marginals:
                \begin{align}
                \frac{\mathbb{P}(Z_{A,j}>z,Z_{B,k}>z)}{\mathbb{P}(W_Z>z)} = \frac{\mathbb{P}(\sum_{i=M_{j-1}+1}^{M_j} \tau_{A,i}>z,\sum_{i=N_{k-1}+1}^{N_k} \tau_{B,i}>z)}{\mathbb{P}(W_Z>z)}
            \end{align}
            Denote the event $F = \{M_j = m_j, N_k =n_k, M_{j-1}=m_{j-1}, N_{k-1}=n_{k-1}, n_{k-1}<n_r, m_{j-1}<m_j\}$. Recalling that $(\tau_{A,i})_{i \in \mathbb{N}^*}$, $(\tau_{B,i})_{i \in \mathbb{N}^*}$, $M_j$, $M_{j-1}$, $N_k$, $N_{k-1}$, are all independent, we have,
            \begin{align}
                &\frac{\mathbb{P}(Z_{A,j}>z,Z_{B,k}>z)}{\mathbb{P}(W_Z>z)} \\
                &= \frac{1}{\mathbb{P}(W_Z>z)} \sum_{m_j, m_{j-1}, n_k, n_{k-1}} \mathbb{P}\Bigl(\sum_{i=m_{j-1}+1}^{m_j} \tau_{A,i}>z,\sum_{i=n_{k-1}+1}^{n_k} \tau_{B,i}>z\Bigr) \mathbb{P}(F). \notag
            \end{align}
            Since the latter sum is bounded above by $\sum_{m_j, m_{j-1}, n_k, n_{k-1}} \mathbb{P}(F) = 1$, it is enough to consider the behaviour of
            \begin{align}
                \lim_{z \to \infty}\frac{\mathbb{P}\Bigl(\sum_{i=m_{j-1}+1}^{m_j} \tau_{A,i}>z,\sum_{i=n_{k-1}+1}^{n_k} \tau_{B,i}>z\Bigr)}{\mathbb{P}(W_Z>z)}.
            \end{align}
            Recall the two ISIs $Z_{A,j}$ and $Z_{B,k}$ are temporally superimposed and hence we can select $\tau_{A,*}$ and $\tau_{B,\circ}$, still temporally superimposed such that
            \begin{align}\label{tomo}
                \lim_{z \to \infty}& \frac{\mathbb{P}\Bigl(\sum_{i=m_{j-1}+1}^{m_j} \tau_{A,i}>z,\sum_{i=n_{k-1}+1}^{n_k} \tau_{B,i}>z\Bigr)}{\mathbb{P}(W_Z>z)} \ge \lim_{z \to \infty} \frac{\mathbb{P}\Bigl( \tau_{A,*}>z, \tau_{B,\circ}>z\Bigr)}{\mathbb{P}(W_Z>z)}  \\
                & = \lim_{z \to \infty} \frac{\mathbb{P}\Bigl( \min_{i \in \{ 1,\dots, \overline{n} \}} \Theta_{A,*}^i > z,  \min_{i \in \{  \underline{n},\dots, n \}} \Theta_{B,\circ}^i > z \Bigr)}{\mathbb{P}(W_Z>z)} \notag \\
                & = \lim_{z \to \infty} \frac{\mathbb{P}\Bigl(  \Theta_{A,*}^1 > z, \dots, \Theta_{A,*}^{\overline{n}} > z, \Theta_{B,\circ}^{\underline{n}} > z, \dots, \Theta_{B,\circ}^n>z  \Bigr)}{\mathbb{P}(W_Z>z)}.
                \notag
            \end{align}
            Notice that, due to the temporal superposition of the two considered output ISIs, the vectors $\bm{\Theta_{A,*}}$ and $\bm{\Theta_{B,\circ}}$ are subset of the same vector of inter-arrival times. Denote it by $\mathcal{T}$ and note that the components of $\mathcal{T}$, of $\bm{\Theta_{A,*}}$, and of $\bm{\Theta_{B,\circ}}$ are asymptotically equivalent.
            Hence,
            \begin{align}\label{barbis}
                \lim_{z \to \infty}& \frac{\mathbb{P}\Bigl(\sum_{i=m_{j-1}+1}^{m_j} \tau_{A,i}>z,\sum_{i=n_{k-1}+1}^{n_k} \tau_{B,i}>z\Bigr)}{\mathbb{P}(W_Z>z)}
                \ge 
                \lim_{z \to \infty} \frac{\mathbb{P}\Bigl(  \mathcal{T}^1 > z, \dots,  \mathcal{T}^n>z  \Bigr)}{\mathbb{P}(W_Z>z)}.
            \end{align}
            Since the components of $\mathcal{T}$ are taken to be fully dependent (hypothesis \textbf{H4} on input ISIs), the limit in the last line of \eqref{barbis} equals a strictly positive constant. To see that this is true, let $W_\Theta$ be the random variable controlling the tail of the components of the vector of forward recurrence time intervals containing $\bm{\Theta_{A,*}}$ and $\bm{\Theta_{B,\circ}}$. Note that $W_\Theta$ controls also the tail of the components of $\mathcal{T}$. Further,
            \begin{align}
                \lim_{z \to \infty} \frac{\mathbb{P}(W_\Theta>z)}{\mathbb{P}(W_Z>z)}
                = \lim_{z \to \infty} \frac{\mathbb{P}(W_\Theta>z)}{\mathbb{P}(Z_{A,j}>z)}
            \end{align}
            by the definition of $W_Z$. Then, consider that
            \begin{align}
                \frac{\mathbb{P}(W_\Theta>z)}{\mathbb{P}\Bigl(\sum_{i=M_{j-1}+1}^{M_j} \tau_{A,i}>z\Bigr)}
                &= \frac{\mathbb{P}(W_\Theta>z)}{\sum_{m_j,m_{j-1}}\mathbb{P}\Bigl(\sum_{i=M_{j-1}+1}^{M_j} \tau_{A,i}>z| \mathcal{M}\Bigr)\mathbb{P}(\mathcal{M)}} \\
                & = \frac{\mathbb{P}(W_\Theta>z)}{\sum_{m_j,m_{j-1}}\mathbb{P}\Bigl(\sum_{i=m_{j-1}+1}^{m_j} \tau_{A,i}>z\Bigr)\mathbb{P}(\mathcal{M)}} \notag
            \end{align}
            where $\mathcal{M}= \{ M_j=m_j,M_{j-1}= m_{j-1} \}$ and noting that in the last step we used the independence between the $\tau$'s and the $M$'s.
            Hence, it is sufficient to prove for every $(m_j,m_{j-1}) \in \text{supp}\bigl((M_j,M_{j-1})\bigr)$ that
            \begin{align}
                \frac{\mathbb{P}\Bigl(\sum_{i=m_{j-1}+1}^{m_j} \tau_{A,i}>z\Bigr)}{\mathbb{P}(W_\Theta>z)} \longrightarrow c \in (0, \infty)
            \end{align}
            as $z \to \infty$. Let $B_j = \{m_{j-1}+1, \dots,m_j\}$. On the one hand, we have
            \begin{align}
                & \frac{\mathbb{P}\Bigl(\sum_{i=m_{j-1}+1}^{m_j} \tau_{A,i}>z\Bigr)}{\mathbb{P}(W_\Theta>z)} \\
                & = \frac{\sum_{\ell \in B_j} \mathbb{P}\Bigl(\sum_{i=m_{j-1}+1}^{m_j} \tau_{A,i}>z| \tau_{A,\ell}\ge\max_{i \in B_j, i \ne \ell} \tau_{A,i}\Bigr) \mathbb{P}( \tau_{A,\ell}\ge\max_{i \in B_j, i \ne \ell} \tau_{A,i})}{\mathbb{P}(W_\Theta>z)} \notag \\
                & \le \frac{\sum_{\ell \in B_j}\mathbb{P} \Bigl((m_j-m_{j-1}) \tau_{A,\ell}>z \Bigr)}{\mathbb{P}(W_\Theta>z)} \notag \\
                & = \frac{\sum_{\ell \in B_j}\mathbb{P} \Bigl( \Theta_{A,\ell}^1> \frac{z}{(m_j-m_{j-1})}, \dots, \Theta_{A,\ell}^n> \frac{z}{(m_j-m_{j-1})} \Bigr)}{\mathbb{P}(W_\Theta>z)} \notag \\
                & \le \sum_{\ell \in B_j}\frac{\mathbb{P} \Bigl( \Theta_{A,\ell}^1> \frac{z}{(m_j-m_{j-1})} \Bigr)}{\mathbb{P}(W_\Theta>z)}. \notag 
            \end{align}
            Taking the limit for $z \to \infty$ we obtain
            \begin{align}
                \lim_{z \to \infty} \frac{\mathbb{P}\Bigl(\sum_{i=m_{j-1}+1}^{m_j} \tau_{A,i}>z\Bigr)}{\mathbb{P}(W_\Theta>z)} \le (m_j-m_{j-1})^{\alpha +1}.
            \end{align}
            On the other hand,
            \begin{align}
                \frac{\mathbb{P}\Bigl(\sum_{i=m_{j-1}+1}^{m_j} \tau_{A,i}>z\Bigr)}{\mathbb{P}(W_\Theta>z)}
                & \ge \frac{\mathbb{P}\Bigl( \tau_{A,m_j}>z\Bigr)}{\mathbb{P}(W_\Theta>z)} \\
                & = \frac{\mathbb{P}(\Theta_{A,m_j}^1>z, \dots, \Theta_{A,m_j}^n>z)}{\mathbb{P}(W_\Theta>z)} \notag
            \end{align}
            Again, by taking the limit for $z \to \infty$ and considering $\textbf{H4}$ on the input ISIs, we arrive at
            \begin{align}
                \lim_{z \to \infty} \frac{\mathbb{P}\Bigl(\sum_{i=m_{j-1}+1}^{m_j} \tau_{A,i}>z\Bigr)}{\mathbb{P}(W_\Theta>z)} \ge \text{const} \in (0,\infty).
            \end{align}
            This is enough to proving that \textbf{H4} holds for the output ISI vector $(Z_{A,j}, Z_{B,k})$.
        \end{proof}

	\end{appendices}

\bibliographystyle{abbrvnat}
\bibliography{neuron.bib}

\end{document}